\theoremstyle{plain}
\newtheorem{theorem}{Theorem}
\newtheorem{corollary}{Corollary}
\newtheorem{lemma}{Lemma}
\newtheorem{proposition}{Proposition}
\theoremstyle{definition}
\numberwithin{equation}{section}
\numberwithin{lemma}{section}
\numberwithin{theorem}{section}
\numberwithin{proposition}{section}
\renewcommand{\Re}{\mathrm{Re\,}}
\renewcommand{\Im}{\mathrm{Im\,}}
\newcommand{\bigO}{O}
\newcommand{\littleo}{o}
\begin{document}
\title[Bergman in $L^p$]{The Bergman projection in $L^p$
for domains with minimal smoothness}
\author[Lanzani and Stein]{Loredana Lanzani$^*$
and Elias M. Stein$^{**}$}
\thanks{$^*$ Supported in part by a  National Science Foundation IRD plan,  and by award DMS-1001304}
\thanks{$^{**}$ Supported in part by the National Science Foundation, award
DMS-0901040, and by KAU of Saudi Arabia}
\address{Dept. of Mathematics \\        
University of Arkansas 
Fayetteville, AR 72701
  USA}
\address{
Dept. of Mathematics\\Princeton University 
\\Princeton, NJ   08544-100 USA }
  \email{loredana.lanzani@gmail.com,\; stein@math.princeton.edu}
\thanks{2000 \em{Mathematics Subject Classification:} 32A, 42B, 31B}
\begin{abstract} Let $D\subset\mathbb C^n$ be a bounded, strongly Levi-pseudoconvex domain with minimally smooth boundary.
We prove $L^p(D)$-regularity for the Bergman projection $B$, 
and 
for the operator $|B|$ whose kernel is the absolute value of the Bergman kernel with $p$ in the range $(1,+\infty)$.
 As an application,
we show that the space of holomorphic functions in a neighborhood of 
$\overline{D}$ is dense in $\vartheta L^p (D)$.
\end{abstract}
\maketitle
\centerline{{\em Dedicated to John P. D'Angelo, on the occasion of his 60th birthday}}
\medskip

\section{Introduction}

\indent This is the first in a series of papers dealing with the $L^p$-theory of reproducing operators such as the Cauchy-Fantappi\'e integrals and the 
Szeg\H o and Bergman projections for domains in $\mathbb{C}^n$ whose boundaries have minimal smoothness. In the present paper we study the Bergman projection on domains in $\mathbb{C}^n$ that are strongly 
pseudoconvex and have $C^2$ boundaries. In succeeding papers we will establish analogous $L^p$ results for the Cauchy-Leray integral on strongly $\mathbb{C}$-linearly convex domains with $C^{1,1}$ boundaries, and for the Szeg\H o projection, as well as certain holomorphic Cauchy-Fantappi\'e integrals, on strongly pseudoconvex domains with $C^2$ boundaries.\\

In considering each of these reproducing operators and their integral kernels, we may divide them into several classes. First, the \textquotedblleft universal\textquotedblright \ Bochner-Martinelli kernel,
which is unique among all these in that 
it is essentially independent of the domain. However its serious drawback is that it does not produce holomorphic functions in general
 (because it is not holomorphic in the parameter) 
 and so plays no role below. Next there are the \textquotedblleft canonical\textquotedblright 
  kernels that are determined by the domain in question, but do not depend on any particular choice of defining function for the domain. Among these are the Cauchy-Leray,
Szeg\H o, and Bergman 
kernels. Finally there are various kinds of non-canonical 
kernels, such as the kernels of the operators $B_\epsilon$ used below, that also depend on the choice of defining function for the domain.\\

Regularity properties of the Bergman projection and in particular its $L^p$ boundedness have been the object of considerable interest for more that 40 years. When the boundary of the domain $D$ is sufficiently smooth, decisive results were obtained in the following settings:
\begin{enumerate}
\item[(a)] 
When $D$ is strongly pseudoconvex, \cite{F}, \cite{PS}.
\item[(b)]When $D \subset \mathbb{C}^2$ and its boundary is of finite type \cite{M}, \cite{NRSW}.
\item[(c)]
When $D \subset \mathbb{C}^n$ is convex and its boundary is of finite type \cite{M1}, \cite{MS}.
\item[(d)] 
When $D \subset \mathbb{C}^n$  is of finite type and 
its Levi form is diagonalizable \cite{CD}.
\end{enumerate}

\noindent Related results were obtained in \cite{B}, \cite{B1}, \cite{BaLa},  \cite{BB},
\cite{BoLo}, \cite{EL}, \cite{H}, \cite{KP} and \cite{Z}.\\

Turning to the problem of what might be the suitable condition of minimal smoothness for a domain $D$, we see a clear distinction between the case of $\mathbb{C}$ (one complex variable) and $\mathbb{C}^n , \ n \geq 2$. In the former case, all the Cauchy-Fantappi\' e kernels reduce to the familiar Cauchy kernel
 (which is, of course, holomorphic in the parameter)
  and the natural limit of regularity of the boundary involves one derivative, if only because of the necessity, for the Cauchy integral and the Szeg\H o projection, that the 
  boundary 
  be rectifiable. Indeed deep results of this kind  
 -- \textquotedblleft near $C^1$\textquotedblright \ e.g., Lipschitz boundaries
 -- have been obtained by a number of mathematicians for the Cauchy integral and the Szeg\H o and Bergman projections. 
 (Recent results are
  in \cite{LS}, along with 
  a review of 
  the extensive literature.) However when $n \geq 2$, pseudoconvexity must intervene, and because of this the natural lower limit of regularity should by necessity involve two orders of differentiability. Now if we want the generality of considering all domains that are of class $C^2$, we then have essentially the following choice: either allow all (weakly) pseudoconvex domains, or restrict attention to strongly pseudoconvex domains. This is because to consider an intermediate class of domains (say those of \textquotedblleft finite-type\textquotedblright) \ would in effect require differentiability of higher order, related to the type in question. With these limitations in mind, we state our main result.\\

 \textbf{Theorem.}  \textit{Let} $D$ \textit{be a bounded domain in} $\mathbb{C}^n$ \textit{which has a} $C^2$ \textit{boundary that is strongly pseudoconvex.} \textit{Then the Bergman projection} $B$ \textit{of} $L^2(D)$ \textit{to the Bergman Space} $\vartheta L^2(D)$ \textit{is bounded on} $L^p(D)$, \textit{for} $1 < p < \infty$.\\

The main difference when dealing with the situation when $D$ is only of class $C^2$ compared with the cases of the more regular domains treated in (a) - (d) is that in each of these cases
 known formulas for the Bergman kernel, or at least size estimates, 
played a decisive role. In our general situation, such estimates are unavailable and we must proceed by a different analysis.\\

Our starting point is the idea used in \cite{KS} to study the Szeg\H o projection which was adapted by \cite{L} and then \cite{R} for the Bergman projection, all these when the domains are sufficiently smooth. In terms of the Bergman operator $B$, this proceeded by constructing another (non-orthogonal) projection $B_1$ via the Cauchy-Fantappi\`{e} formalism, corrected by the solution of a $\overline{\partial}$-problem. Then a simple formula connected $B$ with $B_1$  and $B^\ast_1$, and the problem was resolved because $B_1 - B_1^\ast$ was \textquotedblleft smoothing\textquotedblright \ (or sufficiently small). In our general situation this regularity or smallness is not possible, but what works instead can be described imprecisely as follows: One constructs an appropriate family $\{B_\epsilon\}_{\epsilon>0}$ of non-orthogonal projections. While this family does not approximate $B$, (in fact the norms of the $B_\epsilon$ are in general unbounded as $\epsilon \to 0$), suitable truncations of the $B_\epsilon$ approximate in a specific sense the \textquotedblleft essential part\textquotedblright \ of $B$. This is made precise in Lemma \ref{L:B-B-star-dec}. \\

There are two further results worth mentioning that follow from our analysis. First, not only is the operator $B$ bounded on $L^p$, but so is the operator $| B |$ whose kernel is the absolute value of the Bergman kernel. This is in sharp contrast with the Cauchy-Leray integral and the Szeg\H o projection, because such operators are non-absolutely convergent singular integrals, and this makes their treatment more intricate than that of $B$. Secondly we have the following approximation property: The linear space of functions holomorphic in a neighborhood of $\overline{D}$ is dense in $\vartheta L^p (D)$, $1<p<\infty$.\\
\section{The Levi polynomial and its variants}\label{1:Levi}\vspace*{.1in}
\subsection{Preliminaries}\label{Ss:preliminaries}
$D$ is a bounded domain in $\mathbb{C}^n$ which is of class $C^2$ and is strongly pseudoconvex. Thus there is a defining function $\rho \in C^2(\mathbb{C}^n)$ for $D$, with $D = \{ \rho < 0 \}$ and $| \triangledown \rho | > 0$ on $bD$, with $\rho$ strictly pluri-subharmonic.
Define $P_w(z)$, the Levi polynomial at $w$
by
\begin{equation*}
 P_w (z) = \sum_j\limits \frac{\partial \rho (w)}{\partial w_j}(z_j - w_j ) + \frac{1}{2} \ \sum_{j,k}\limits  \frac{\partial^2 \rho(w)}{\partial w_j w_k}(z_j - w_j)   (z_k- w_k ),
\end{equation*} 
which is a quadratic (holomorphic) polynomial in $z$.
 A basic property of $P_w$ is that
\begin{equation}\label{E:second-order-Taylor}
   \rho(w) + 2\,\Re P_w (z) + L_w(z-w)
   = \mathcal{T}_w (z)
\end{equation} 
is the second-order Taylor expansion of $\rho (z)$ about $w$. (See e.g. 
\cite{K}.) Here 
\begin{equation*}
    L_w(z-w) = \sum_{j, k} \frac{\partial^2 \rho (w)}{\partial w_j \partial \overline{w}_k}\, (z_j - w_j) \left( \overline{z}_k - \overline{w}_k \right)
\end{equation*}
is the Levi form and 
$$L_w (z-w)\geq c\, | z - w |^2 , \quad c > 0,$$
 by the strict pluri-subharmonicity we assumed. So we can write the above as
\begin{eqnarray}\label{E:Taylor-with-Levi}
   2\,\Re(-\rho (w) - P_w (z))=\\
    -\rho (z) - \rho (w) + L_w (z - w) + \littleo(|z - w |^2 )\notag
   \end{eqnarray}
as $|z-w|\to 0$. Now to extend the above when $z - w$ is not small, we define the function $g(w, z)$ by
\begin{equation}\label{E:def-g}
   g(w, z) = - P_w (z) \chi + | z - w |^2 (1 - \chi) - \rho (w).
\end{equation} 
Here $\chi = \chi(| z - w |^2 )$ is a $C^\infty$ function which is 1 when $|z - w | \leq \nicefrac{\mu}{2}$ and vanishes when $| z - w | \geq \mu$. We take $\mu$ to be a small constant, fixed so that \eqref{E:Taylor-with-Levi} and the strict positivity of $L_w$ guarantee that we have 
\begin{equation} \label{E:g-bounds}
2\,\Re\, g(w, z)\,\ge\,
\Bigg\{\!\!
\begin{array}{lll}
 - \rho (w) - \rho (z) + c | w-z |^2 ,
 &\text{if}\  |w-z | \leq \mu \\
 \quad &\\
 \qquad\qquad\qquad\qquad\qquad\  c, &\text{if}\  | w-z | \geq \mu
\end{array}
\end{equation}
for some constant $c > 0$. 

Now $g$, and its variants $g_\epsilon$ defined below, will be our basic tools.

Note $g(w,z)$ is holomorphic (or $C^\infty$) in $z$, hence smooth in that variable, but is only continuous in $w$, since we have only assumed $\rho$ is of class $C^2$. Because of this we introduce for each $\epsilon > 0$ an $n\times n$ matrix 
$$\displaystyle \tau^\epsilon (w) = \big(\tau^\epsilon_{j, k}(w) \big)$$ 
of functions that are each smooth (precisely: $C^2$ in $\overline{D}$), so that 
\begin{equation*}
   \sup_{w \in \overline{D}}\, \left| \frac{\partial^2 \rho (w)}{\partial w_j \partial w_k} - \tau^\epsilon_{j,k} (w) \right| \leq \epsilon \quad \text{ for all } 
   \quad 1 \leq j, \, k \leq n.
\end{equation*}
We define accordingly
\begin{equation}\label{E:P-eps-def}
   P^\epsilon_w (z) = \sum_j\limits \frac{\partial \rho (w)}{\partial w_j} \left( z_j - w_j \right) + \frac{1}{2} \sum_{j,k}\limits \tau^\epsilon_{j,k} (w)(z_j - w_j )(z_k - w_k )
\end{equation}
and
\begin{equation}\label{E:def-g-eps}
   g_\epsilon (w,z) = - P^\epsilon_w (z) \chi + | z - w |^2 (1 - \chi ) - \rho (w).
\end{equation} 
Note that since 
\begin{equation}\label{E:g-g-eps-close}
|\, g(w, z) - g_\epsilon (w, z) |\ \leq\ c\, \epsilon\, |w-z|^2
\end{equation}
once we have chosen $\epsilon$ sufficiently small, we can assert that 

\begin{equation} \label{E:g-eps-bounds}
2\,\Re\, g_\epsilon (w, z)\,\ge\,
\Bigg\{\!\!
\begin{array}{lll}
 - \rho (w) - \rho (z) + c | w-z  |^2 ,
 &\text{if}\  |w-z  | \leq \mu \\
 \quad &\\
 \qquad\qquad\qquad\qquad\qquad\  c, &\text{if}\  | w-z  | \geq \mu
\end{array}
\end{equation}
(after having decreased the sizes of $\mu$ and $c$, if necessary).

\subsection{Size estimates of the $g_\epsilon$}\label{Ss:g-eps-size}
There are two properties of the $g_\epsilon$ that are needed below: size and symmetry estimates. The first is contained in the following.
We use the notation
$$\langle a, b \rangle=
\sum_j\limits a_j b_j,$$
where $a = (a_1 ,  \ldots,  a_n )$ and $b = (b_1 ,  \ldots,  b_n )$ are vectors in $\mathbb{C}^n$. 
\begin{proposition}\label{P:g-g-eps}
   If $z$ and $w$ are in $\overline{D}$ and $\epsilon$ is sufficiently small, then\\
\item[(a)] $|g(w, z) | \approx | \rho (w) | + |\rho (z) | +  | \Im \langle \partial \rho (w), \ w - z \rangle | + | w-z  |^2$\\
\item[(b)] $|g_\epsilon (w, z) | \approx | g(w, z) |$.
\end{proposition}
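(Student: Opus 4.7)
My plan is to prove part (a) by a direct case analysis on $|w-z|$ relative to the cutoff parameter $\mu$, and to deduce (b) from (a) by a perturbation argument based on \eqref{E:g-g-eps-close}. The three regimes to consider are: the near region $|w-z|\le\mu/2$, where $\chi=1$ and $g(w,z) = -\rho(w) - P_w(z)$; the transition region $\mu/2\le|w-z|\le\mu$, where both sides of (a) turn out to be comparable to positive constants; and the far region $|w-z|\ge\mu$, where $\chi=0$ and $g(w,z) = |w-z|^2 - \rho(w)$ is explicit.

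The near region is the substantive case. Here I would separate the real and imaginary parts of $g(w,z) = -\rho(w)-P_w(z)$. For the real part, \eqref{E:Taylor-with-Levi} (rewritten using $-\rho(w),-\rho(z)\ge 0$ on $\overline{D}$) yields
\begin{equation*}
  2\,\Re g(w,z) = |\rho(w)| + |\rho(z)| + L_w(z-w) + \littleo(|w-z|^2),
\end{equation*}
and the strict pluri-subharmonicity $L_w(z-w)\ge c|w-z|^2$ absorbs the $\littleo$ term once $\mu$ is small, giving the two-sided comparison $\Re g(w,z)\approx |\rho(w)| + |\rho(z)| + |w-z|^2$. For the imaginary part, direct expansion of $P_w$ shows
\begin{equation*}
  \Im g(w,z) = \Im\langle\partial\rho(w),w-z\rangle + \bigO(|w-z|^2),
\end{equation*}
the error being the manifestly quadratic contribution of the Hessian term in $P_w$. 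Combining the two via $|g|\approx |\Re g| + |\Im g|$ and absorbing the $\bigO(|w-z|^2)$ from the imaginary part into the $|w-z|^2$ already present yields (a) in this regime. The only delicate point is the bookkeeping of these errors: the quadratic error from $\Im g$ is controlled by $\Re g$ (since $\Re g\gtrsim |w-z|^2$), which is how $|\Im\langle\partial\rho(w),w-z\rangle|$ can be recovered as a lower bound contribution.

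In the transition region the lower bound \eqref{E:g-bounds} forces $|g(w,z)|\gtrsim\mu^2$, while boundedness of $D$ and of $\partial\rho$ keeps both sides of (a) in a fixed interval $[c\mu^2, C]$, so the comparison is trivial. In the far region $g(w,z)=|w-z|^2-\rho(w)$ is real and $\ge\mu^2$, and the comparison to the right-hand side of (a) is an easy consequence of the boundedness of $\rho$ and $\partial\rho$. Finally, (b) follows from (a) and \eqref{E:g-g-eps-close}: in the near and transition regions $|g(w,z)|\gtrsim|w-z|^2$, while $|g-g_\epsilon|\le c\epsilon|w-z|^2$, so $|g-g_\epsilon|\le C\epsilon|g|$; in the far region both $|g|$ and $|g-g_\epsilon|$ are bounded above and below by constants, with the latter additionally small in $\epsilon$. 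Choosing $\epsilon$ sufficiently small, the triangle inequality delivers $|g_\epsilon(w,z)|\approx |g(w,z)|$. The main obstacle is not conceptual but bookkeeping: throughout the near-region argument one must be vigilant that every $\bigO(|w-z|^2)$ and $\littleo(|w-z|^2)$ remainder is dominated by a positive quantity already present on the right-hand side of (a).
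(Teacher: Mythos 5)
Your argument is correct and follows essentially the same route as the paper: in the near region ($|w-z|\le\mu/2$) one compares $\Re g$ via \eqref{E:Taylor-with-Levi} and the positivity of the Levi form, and $\Im g$ via direct expansion of $P_w$ (the paper's \eqref{E:Re-g-approx} and \eqref{E:Im-g-ineq}), then combines; away from the diagonal both sides are bounded above and below by constants via \eqref{E:g-bounds}; and (b) follows from (a) together with \eqref{E:g-g-eps-close} and the observation $|g|\gtrsim|w-z|^2$. Your three-region split ($\le\mu/2$, $[\mu/2,\mu]$, $\ge\mu$) is a cosmetic refinement of the paper's two-region split ($\le\mu/2$ versus $\ge\mu/2$), and you make the absorption bookkeeping slightly more explicit, but the substance is identical.
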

Here (and below) the constants implicit in the equivalences $\approx$ are independent of $\epsilon$.
\begin{proof}
   Note that $|\rho (z) | = -\rho (z) \text{ and} \ |\rho (0) | = -\rho (w)$, since $z$ and $w$ are in $\overline{D}$. Next when $|w-z| \geq \mu/ 2$, the assertion (a) is immediate from \eqref{E:g-bounds}. We turn to the case
    $|w-z | \leq \mu / 2$.\\

We first observe that \eqref{E:Taylor-with-Levi} and \eqref{E:def-g} show that 
\begin{equation}\label{E:Re-g-approx}
   | \Re\,g(w,z) | \approx | \rho (z) | + | \rho (w) | + | w-z |^2.
\end{equation} 
Also by \eqref{E:def-g}
 $$\Im g(w, z) = - \Im\, P_w (z)$$ 
and
by the definition of $P_w (z)$ this yields 
\begin{equation}\label{E:Im-g-ineq}
   \left|\, \Im\big( g(w, z) - \langle \partial \rho (w), w - z\rangle \big) \right|
    \leq c\, | z - w |^2.
\end{equation}
Hence
\begin{equation*}
  \left| \Im\, g(w, z) \right| + c | z - w |^2 \geq  \left| \Im  \langle \partial \rho (w), w - z \rangle \right| \, , 
  \end{equation*} 
which gives 
$$| g(w, z) |\ \gtrsim\ | \rho (z) | + | \rho (w) | + 
|\, \Im\langle \partial \rho (w), \ w - z \rangle | + | w-z |^2.$$

To see the reverse, note that by \eqref{E:Im-g-ineq}
$$
|\,\Im g(w, z) | \ \leq \ |\, \Im \langle \partial \rho (w), w - z \rangle | + c\, |w-z |^2,$$
 and combining this with \eqref{E:Re-g-approx} gives
$$| g(w, z) | \ \lesssim\  | \rho (z) | \ + \ | \rho (w) | \ + \ |\, \Im \langle \partial \rho (w), w - z \rangle | \ + \ | w - z |^2.$$
Hence (a) is fully established.
The conclusion (b) follows immediately from (a) and 
\eqref{E:g-g-eps-close}.
\end{proof}

From now on all our statements will be restricted to the $\epsilon$ that are small enough so that both 
\eqref{E:g-eps-bounds}  and the above conclusion holds.

\subsection{Symmetries of the $g_\epsilon$}\label{Ss:g-eps-symm}
One can observe that by Proposition \ref{P:g-g-eps} we have the symmetries
\begin{equation}\label{E:g-g-eps-symm}
   | g(w, z) | \approx | g(z, w) | \ \ \ \text{and} \ \ \  \ |g_\epsilon (w, z) | \approx | g_\epsilon (z, w) |.
\end{equation} 
In fact since $\rho$ is of class $C^2$,
\begin{equation*}
   \langle \partial \rho (w) , z - w \rangle - \langle \partial \rho (z), z - w \rangle = \bigO(| w-z |^2) , 
\end{equation*}
thus
\begin{equation*}|\,\Im \langle \partial \rho (w), w-z  \rangle | + | w-z |^2 \approx |\, \Im \langle \partial \rho (z), w - z \rangle | + | w-z |^2
\end{equation*}
proving by (a) of Proposition \ref{P:g-g-eps}, the asserted symmetry of 
$| g(w, z) |$ and hence of $| g_\epsilon (w, z) |$.\\

However for what follows below a more refined version of this symmetry is crucial.
\begin{proposition}\label{P:g-eps-g-eps-conj}
  For any $\epsilon > 0$, there is $\delta = \delta_\epsilon > 0$, so that 
\begin{equation*}
   | g_\epsilon (w, z) - \overline{g}_\epsilon (z, w) | \ \leq \ 
   c\, \epsilon\, | z - w |^2 , \quad \text{ if }\quad  | z - w | < \delta_\epsilon
\end{equation*}
\end{proposition}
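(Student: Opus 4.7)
The plan is to establish the stronger pointwise identity
$$g_\epsilon(w,z) - \overline{g_\epsilon(z,w)} = \littleo(|z-w|^2) + \bigO(\epsilon|z-w|^2),$$
with constants independent of $\epsilon$, and then absorb the $\littleo$ piece into $\epsilon|z-w|^2$ by choosing $\delta_\epsilon$ smaller than an appropriate modulus-of-continuity threshold for $\partial^2\rho$. First restrict to $|z-w| < \mu/2$, so that $\chi \equiv 1$ and $g_\epsilon(w,z) = -P^\epsilon_w(z) - \rho(w)$; since $\rho$ is real,
$$g_\epsilon(w,z) - \overline{g_\epsilon(z,w)} = \rho(z) - \rho(w) - P^\epsilon_w(z) + \overline{P^\epsilon_z(w)}.$$
Use \eqref{E:g-g-eps-close} to swap $P^\epsilon$ for $P$ at cost $\bigO(\epsilon|z-w|^2)$, then combine with \eqref{E:second-order-Taylor} in the form $\rho(z) - \rho(w) = 2\Re P_w(z) + L_w(z-w) + \littleo(|z-w|^2)$ (with uniform $\littleo$ governed by the uniform continuity of $\partial^2\rho$ on $\overline{D}$); noting $2\Re P_w - P_w = \overline{P_w}$ reduces the claim to showing the $\epsilon$-free identity
$$\overline{P_w(z)} + \overline{P_z(w)} + L_w(z-w) = \littleo(|z-w|^2).$$

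Decompose $P_w(z) = A_w(z-w) + Q_w(z-w)$ into linear and holomorphic-quadratic parts (coefficients $\partial\rho(w)/\partial w_j$ and $\tfrac12\,\partial^2\rho(w)/\partial w_j\partial w_k$ respectively). A first-order Taylor expansion of $z \mapsto \partial\rho(z)/\partial z_j$ at the base point $w$, valid with uniform $\littleo(|z-w|)$ remainder because $\partial\rho \in C^1$, gives
$$A_w(z-w) + A_z(w-z) = \sum_j \left[\frac{\partial\rho(w)}{\partial w_j} - \frac{\partial\rho(z)}{\partial z_j}\right](z_j - w_j) = -2 Q_w(z-w) - L_w(z-w) + \littleo(|z-w|^2),$$
since the mixed second derivatives reassemble into $L_w(z-w)$ and the pure holomorphic ones into $2 Q_w(z-w)$. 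Conjugating (and using that $L_w$ is real) and adding $\overline{Q_w(z-w)} + \overline{Q_z(w-z)}$ to both sides yields
$$\overline{P_w(z)} + \overline{P_z(w)} + L_w(z-w) = \overline{Q_z(w-z)} - \overline{Q_w(z-w)} + \littleo(|z-w|^2),$$
and the remaining difference equals
$$\frac{1}{2}\sum_{j,k} \overline{\left[\frac{\partial^2\rho(z)}{\partial z_j\partial z_k} - \frac{\partial^2\rho(w)}{\partial w_j\partial w_k}\right](z_j-w_j)(z_k-w_k)} = \littleo(|z-w|^2)$$
by the continuity of the second derivatives of $\rho$.

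Combining these steps, $g_\epsilon(w,z) - \overline{g_\epsilon(z,w)} = \littleo(|z-w|^2) + \bigO(\epsilon|z-w|^2)$ with implicit constants depending only on $\rho$ and $n$. Given $\epsilon$, choose $\delta_\epsilon$ so small that the modulus of continuity of $\partial^2\rho$ at scale $\delta_\epsilon$ is at most $\epsilon$; then on $\{|z-w| < \delta_\epsilon\}$ all $\littleo(|z-w|^2)$-contributions are bounded by $\epsilon|z-w|^2$, giving the claim. The main obstacle is separating the two types of error: the $\bigO(\epsilon|z-w|^2)$ terms arise cleanly from $|\tau^\epsilon - \partial^2\rho| \le \epsilon$, while the $\littleo(|z-w|^2)$ terms arise from the $\epsilon$-independent modulus of continuity of $\partial^2\rho$, and it is precisely this second, non-quantitative source of error that forces $\delta_\epsilon$ to depend on $\epsilon$ and would vanish under a stronger regularity hypothesis such as $\rho \in C^{2,\alpha}$.
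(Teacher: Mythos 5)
Your proof is correct and follows essentially the same path as the paper's: Taylor-expand $\partial\rho$ with a remainder controlled by the modulus of continuity of the second derivatives, swap $\tau^\epsilon$ for $\partial^2\rho$ at $\bigO(\epsilon|z-w|^2)$ cost, and then shrink $\delta_\epsilon$ so that $\omega(\delta_\epsilon)\le\epsilon$; the only cosmetic difference is that you expand $\partial\rho$ about the base point $w$ and isolate the clean identity $\overline{P_w(z)}+\overline{P_z(w)}+L_w(z-w)=\littleo(|z-w|^2)$, whereas the paper expands about $z$ and packages the result as $\rho(z)-\mathcal{T}_w(z)+\bigO(|w-z|R_1)+R_2$.
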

The inequality above takes into account the modules of continuity of the second derivatives of $\rho$. In this connection we define 
\begin{equation*}
  \omega_{j, k} ( \delta ) = \sup_{| z - w | \leq \delta}\limits \left( \left| \frac{\partial^2 \rho (z)}{\partial z_j \partial z_k} - \frac{\partial^2 \rho (w)}{\partial w_j \partial w_k} \right| + \left|    \frac{\partial^2 \rho (z)}{\partial {z}_j \partial \overline{z}_k} - \frac{\partial^2 \rho (w)}{\partial w_j \partial \overline{w}_k} \right| \right)
\end{equation*}
taken over all $z, w \in \overline{D}$. We set 
$$\displaystyle \omega(\delta ) = \sum_{j, k} \omega_{j, k} (\delta )$$

Then in view of the uniform continuity of the second derivatives of $\rho$, the function $\omega (\delta )$ decreases to 0 as $\delta \to 0$.\\

We observe next, with $\mathcal{T}_w (z)$ denoting the Taylor polynomial of order 2 of $\rho(z)$ centered at $w$, see \eqref{E:second-order-Taylor}, that then
\begin{equation}\label{E:ineq-Taylor-pol}
   \left| \rho (z) - \mathcal{T}_w (z) \right|\ \leq\ c\, \omega ( | z - w | ) | z - w |^2,\quad if \quad |w-z|<\delta.
\end{equation} 
In fact, for any $C^2$ function $f$ on $\mathbb{R}$ we have the identity
\begin{equation*}
f(t) - f(0) - t\, f^\prime (0) - \frac{t^2}{2}\, f^{\prime\prime}(0)\ =\
 \int^t_0\limits (t-s)\left( f^{\prime\prime}(s) -  f^{\prime\prime}(0) \right) ds;
\end{equation*}
also the integral is majorized by
\begin{equation*}
   \frac{1}{2}\, t^2 \sup_{0 \leq s \leq t}
   \left(t\left| f^{\prime\prime}(s) - f^{\prime\prime}(0) \right|\right) . 
\end{equation*}
Applying this to $f(t) = \rho(w + t(z - w))$, then yields the inequality 
\eqref{E:ineq-Taylor-pol}, when we take $t=1$.

To prove Proposition \ref{P:g-eps-g-eps-conj}, note by 
\eqref{E:def-g-eps} that if we take $\delta \leq \mu/2$, so that $| z - w | \leq \mu/2$, by \eqref{E:P-eps-def} we are reduced to considering 
\begin{eqnarray}\label{E:temp-1}
   \rho (z) - \rho (w) + \langle \partial \rho (w), w-z \rangle + \langle \overline{\partial} \rho (z), \overline{w} - \overline{z} \rangle +\\
   + \frac{1}{2} \langle\, \overline{\tau}^{\,\epsilon} (z); \ \overline{w} - \overline{z}, \ \overline{w} - \overline{z} \rangle - \frac{1}{2} \langle \tau^\epsilon (w); \ w - z, \ w - z \rangle .\notag
\end{eqnarray} 
Here we are using the short hand
\begin{equation*}
 \langle M; a, b  \rangle = \sum\limits_{j, k} M_{j,k}\, a_j\, b_k
\end{equation*}
if $M = (M_{j,k})$ is an $n \times n$ matrix and $a = (a_1, \ldots, a_n ),
 \   b = (b_1, \ldots, b_n )$ are vectors.
Next we write the Taylor expansion of order 1 of 
$\partial \rho (w)/\partial w_j$ centered at $z$, and use the same argument that led to \eqref{E:ineq-Taylor-pol}. We thus see that 
\begin{equation*}
   \frac{\partial \rho (w)}{\partial w_j} = \frac{\partial \rho (z)}{\partial z_j} + \sum_k\limits \left( \frac{\partial^{2} \rho (z)}{\partial z_j \partial z_k} (w_k - z_k )   + \frac{\partial^2 \rho (z)}{\partial z_j \partial \overline{z}_k} (\overline{w}_k - \overline{z}_k ) \right) + R_1 ,
\end{equation*}
where $\displaystyle | R_1 | \ \leq\ c\, \omega ( | w - z | ) | w - z |$.
If we insert this for $\partial \rho (w)$ in \eqref{E:temp-1} we obtain
\begin{eqnarray*}
 g_\epsilon (w,z) - \overline{g}_\epsilon (z, w) =\\
  \rho (z) - \rho (w) + \langle \partial\rho (z), w - z \rangle 
  + \langle \overline{\partial} \rho (z), \ \overline{w} -  \overline{z} \rangle +\\
 + \frac{1}{2} \left \langle \frac{\partial^2 \rho (z)}{\partial z \partial z}; \ w - z, w - z \right \rangle + \frac{1}{2} \left \langle \frac{\partial^2 \rho (z)}
 {\partial \overline{z} \partial \overline{z}} ; \ \overline{w} - \overline{z} , \ \overline{w} - \overline{z} \right \rangle +\\
 +  \left \langle \frac{\partial^2 \rho (z)}{\partial z \partial \overline{z}} ; \ w - z, \overline{w} - \overline{z} \right \rangle 
+ \bigO(| w - z | R_1 ) + R_2 ,
\end{eqnarray*}
\noindent where, 
\begin{align*}
   R_2 = &\frac{1}{2} \left \langle \frac{\partial^2 \rho (z)}{\partial z \partial z} - \tau^\epsilon (w); \ w - z, \ w - z \right \rangle\\
&+ \frac{1}{2} \left \langle \overline{\tau}^{\,\epsilon} (z) - \frac{\partial \rho(z)}{\partial \overline{z} \partial \overline{z}} ; \ \overline{w} - \overline{z} , \   \overline{w} - \overline{z} \right \rangle .
\end{align*}
So this can be written as
\begin{equation*}
   g_\epsilon (w, z) - \overline{g}_\epsilon (z, w) = \rho (z) - 
   \mathcal{T}_w (z) + \bigO(| w - z | R_1 ) + R_2 .
\end{equation*}
Now recall \eqref{E:ineq-Taylor-pol}, and by the same token, $| R_1 | \leq c \omega (| z - w | ) | z - w |$, while clearly $| R_2 | \leq c  \epsilon | z - w |^2 
 $. Thus we get the inequality $| g_\epsilon (w, z) - \overline{g}_\epsilon (z, w) | \leq c \epsilon | z - w |^2$ for $ | z - w | \leq \delta$, as soon as we choose $\delta \leq \delta_\epsilon$, with $\omega (\delta_\epsilon ) \leq \epsilon$. Proposition \ref{P:g-eps-g-eps-conj} is therefore proved.\\

We should note that in this section we have not used the $C^2$ regularity of the $\left( \tau^\epsilon_{j, k} (w) \right)$; this will be needed in the next section.

\section{An \textquotedblleft approximation\textquotedblright \ for the Bergman projection}\label{S:approx}
The first step in the study of the Bergman projection $B$ for the domain $D$ is to form a family $\left\{ B_\epsilon \right\}_{\epsilon>0}$ of operators that are each (non-orthogonal) projections from $L^2(D)$ to the Bergman space
$\vartheta L^2(D)$, that is
the subspace of $L^2(D)$ consisting of holomorphic functions. While these operators do not converge to the Bergman projection $B$ as $\epsilon \to 0$ (in fact their norms are in general unbounded as $\epsilon \to 0$), they nevertheless play a crucial role in the analysis of $B$.
The operator $B_\epsilon$
will be given as a sum
\begin{equation*}
   B_\epsilon = B^1_\epsilon + B^2_\epsilon .
\end{equation*}
The first, $B^1_\epsilon$, is an adaptation of a Cauchy-Fantappi\'{e} integral constructed explicitly using the function $g_\epsilon (w, z)$ of the previous section. The second, $B^2_\epsilon$, is a correction obtained by solving a $\overline{\partial}$-problem in a domain that strictly contains $D$. Here we follow the approach of \cite{KS}, \cite{L}, and \cite{R}, which as it stands, works only in the case of smooth domains (say of class $C^3$). Another significant difference is that now we need a family
 $\{B_\epsilon\}_{\epsilon>0}$, as opposed to a single such operator, and that the properties of $B_\epsilon$ as $\epsilon \to 0$ are now indispensable.
We turn first to $B^1_\epsilon$.
\subsection{The Cauchy-Fantappi\'{e} part}
Keeping in mind the $\left\{ g_\epsilon (w, z) \right\}_{\epsilon>0}$ given by 
\eqref{E:def-g-eps}, we define the (1, 0)-forms $\eta_\epsilon$ by
\begin{align}\label{E:def-eta-eps}
   \eta_\epsilon (w, z) = \chi &\left( \sum_j \frac{\partial \rho (w)}{\partial w_j} dw_j - \frac{1}{2} \sum_{j, k} \tau^\epsilon_{j, k} (w) (w_k - z_k ) dw_j \right)\\
& + (1 - \chi ) \sum_j \left( \overline{w}_j - \overline{z}_j \right) dw_j ,\notag
\end{align} 
so that
\begin{equation}\label{E:re-def-g-eps}
   g_\epsilon (w, z) = \langle \eta_\epsilon (w, z), w - z \rangle - \rho (w).
\end{equation} 
\indent Now with the above, the definition of $B^1_\epsilon$ will be taken to be
\begin{equation}\label{E:def-B-1-eps}
   B_\epsilon^1 (f)(z) = \frac{1}{(2 \pi i)^n} \int_D
    f(w)(\overline{\partial}_w G_\epsilon )^n (w, z),\quad z\in D
\end{equation} 
where we have set
\begin{equation}\label{E:def-G-eps}
G_\epsilon (w, z) \ =\  \frac{\,\eta_\epsilon (w, z)\,}{\,g_\epsilon (w, z)\,}
\end{equation}

From inequality \eqref{E:g-eps-bounds} it follows that
for any $z_0 \in D$ there is an open ball $\mathcal{B}_{z_0}$ centered at $z_0$, and a neighborhood $\mathcal{N}_{z_0}$ of $bD$, so that
 $$g_\epsilon (w, z) + \rho (w) \neq 0,\quad \text{if }\quad  z \in \mathcal{B}_{z_0}, \ \ w \in \mathcal{N}_{z_0}.$$

Thus if  we set
$$\displaystyle\widehat{G}_\epsilon (w, z)\  =\  
\frac{\eta_\epsilon (w, z)}{g_\epsilon (w, z) + \rho (w)}\ =\ 
\frac{\eta_\epsilon (w, z)}{\langle\eta_\epsilon (w, z), w-z\rangle}$$ 
then
\begin{equation}\label{E:generating-G}
   \langle \widehat{G}_\epsilon (w, z), w - z \rangle = 1 , \ \text{ for } \ z \in \mathcal{B}_{z_0} , \ w \in \mathcal{N}_{z_0} .
\end{equation} 
A $(1, 0)$-form $\mathcal G(w, z)$ 
is a 
{\it generating form over $z_0$} if it is of class $C^2$ in the variable $w$ and satisfies \eqref{E:generating-G}, and it is a
{\it generating form over $D$} if it is generating over $z$ for any $z\in D$.
One has the following
\begin{lemma}\label{L:generating-G}
 Suppose that $\mathcal G$ is a generating form over $z\in D$.
 Then
   \begin{equation}\label{E:reproducing-G}
      f(z) = \frac{1}{(2 \pi i)^n} \int_{bD}\limits f(w) 
      \left(\mathcal G \wedge \left( \overline{\partial}_w 
      \mathcal G \right)^{n-1}\right)\!(w, z)
   \end{equation} 
for any $f$ that is holomorphic in $D$ and continuous in $\overline{D}$.
\end{lemma}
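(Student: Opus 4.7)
My plan is to deduce \eqref{E:reproducing-G} from the classical Bochner-Martinelli reproducing formula via a homotopy of generating forms. For $w\neq z$ the Bochner-Martinelli $(1,0)$-form
\[
\mathcal G_{\mathrm{BM}}(w,z)\ =\ \frac{1}{|w-z|^2}\sum_{j=1}^{n}(\overline{w_j-z_j})\,dw_j
\]
itself satisfies $\langle\mathcal G_{\mathrm{BM}},\,w-z\rangle=1$, and the classical Bochner-Martinelli reproducing formula asserts that \eqref{E:reproducing-G} holds with $\mathcal G$ replaced by $\mathcal G_{\mathrm{BM}}$, for every $f$ holomorphic on $D$ and continuous on $\overline D$. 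Thus it suffices to show that
\[
\int_{bD} f(w)\,\bigl[\mathcal G\wedge(\overline{\partial}_w\mathcal G)^{n-1}-\mathcal G_{\mathrm{BM}}\wedge(\overline{\partial}_w\mathcal G_{\mathrm{BM}})^{n-1}\bigr](w,z)\ =\ 0
\]
for all such $f$.

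To achieve this I would introduce the linear homotopy $\mathcal G_t=(1-t)\mathcal G_{\mathrm{BM}}+t\,\mathcal G$, $t\in[0,1]$, defined on a neighborhood $U$ of $bD$ that excludes $z$ (possible since $z\in D$ is an interior point); both $\mathcal G$ and $\mathcal G_{\mathrm{BM}}$ are of class $C^2$ on $U$. Linearity of the pairing in the first slot gives $\langle\mathcal G_t,w-z\rangle=1$ for every $t$, and hence $\langle\dot{\mathcal G}_t,w-z\rangle\equiv 0$. A direct multilinear-algebra computation --- essentially the Koppelman-Leray homotopy identity --- then produces an explicit $(n,n-2)$-form $\beta(w,z)$ in $w$, polynomial in $\mathcal G_{\mathrm{BM}},\,\mathcal G,\,\overline{\partial}_w\mathcal G_{\mathrm{BM}}$ and $\overline{\partial}_w\mathcal G$, satisfying
\[
\mathcal G\wedge(\overline{\partial}_w\mathcal G)^{n-1}\,-\,\mathcal G_{\mathrm{BM}}\wedge(\overline{\partial}_w\mathcal G_{\mathrm{BM}})^{n-1}\ =\ \overline{\partial}_w\beta
\]
on $U$.

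To conclude I would apply Stokes' theorem. Since $\beta$ has bidegree $(n,n-2)$ in $w$, $\partial_w\beta\equiv 0$ for dimensional reasons; and for $f$ holomorphic on $D$, $\overline{\partial}_w f=0$ while $\partial_w f\wedge\beta$ has bidegree $(n+1,n-2)$ and so vanishes as well. Therefore $d_w(f\beta)=f\,\overline{\partial}_w\beta$, and because $bD$ is a closed oriented manifold, Stokes' theorem yields $\int_{bD} f\,\overline{\partial}_w\beta=0$, establishing the desired cancellation. The main obstacle is the Koppelman-Leray identity itself, but it is a purely algebraic consequence of the preserved generating constraint $\langle\mathcal G_t,w-z\rangle=1$ together with the exterior-derivative product rule, and does not interact with $\rho$ or its limited regularity --- only the $C^2$-in-$w$ hypothesis built into the definition of ``generating form'' is needed to legitimize the differentiations.
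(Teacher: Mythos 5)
The paper does not actually prove Lemma~\ref{L:generating-G}; it states the Cauchy--Fantappi\`e reproducing formula as a classical fact (implicitly drawing on Range's book, \cite{R}, which it cites for the surrounding machinery). So there is no ``paper's proof'' to compare against; your task is to supply the standard background result, and your route --- reduce to Bochner--Martinelli and show the difference of kernels is $\overline{\partial}_w$--exact via the linear homotopy $\mathcal G_t$ --- is precisely the textbook argument. The algebraic core you gesture at is correct: differentiating $\langle\mathcal G_t,\,w-z\rangle=1$ in $t$ and in $\overline{w}$ shows that $\dot{\mathcal G}_t$ and the ``columns'' of $\overline{\partial}_w\mathcal G_t$ all annihilate the vector $w-z$, which forces $\dot{\mathcal G}_t\wedge(\overline{\partial}_w\mathcal G_t)^{n-1}=0$; the Koppelman primitive $\beta=(n-1)\int_0^1\dot{\mathcal G}_t\wedge\mathcal G_t\wedge(\overline{\partial}_w\mathcal G_t)^{n-2}\,dt$ then does what you need.

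The one genuine gap is in the final Stokes step. You write $d_w(f\beta)=\partial_w f\wedge\beta+\cdots$ and integrate over $bD$, but the lemma only assumes $f\in\vartheta(D)\cap C(\overline D)$; the boundary trace of $f$ need not be differentiable, so $f\beta$ need not be $C^1$ on $bD$ and Stokes' theorem on the closed manifold $bD$ does not apply directly. The standard repair --- and the one this paper itself uses repeatedly, e.g.\ in the proof of Proposition~\ref{P:reproducing-B-1-eps} --- is to run the identical Stokes argument on the hypersurfaces $bD_\delta=\{\rho=-\delta\}$ for small $\delta>0$. There $f$ is $C^\infty$ (holomorphic well inside $D$), $\beta$ is still $C^1$ because the Koppelman primitive is defined on a full neighborhood $U$ of $bD$, so $\int_{bD_\delta} f\,\overline{\partial}_w\beta=0$ legitimately; then let $\delta\to 0$ and use the continuity of $f$ on $\overline D$ (together with the $C^2$ regularity of $bD$, which makes $bD_\delta\to bD$ in $C^1$) to pass to $\int_{bD} f\,\overline{\partial}_w\beta=0$. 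With that insertion your proof is complete and matches the classical treatment.
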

\noindent Here we have set 
$\left( \overline{\partial}_w 
\mathcal G \right)^{n-1}\!\! = \overline{\partial}_w \mathcal G  \wedge \overline{\partial}_w \mathcal G \ldots \wedge \overline{\partial}_w \mathcal G,$ 
with
the wedge product taken $n-1$ times. The expression
$$
\frac{1}{(2\pi i)^n}\, \mathcal G \wedge \left( \overline{\partial}_w \mathcal G \right)^{n-1}
$$
is called the {\it Cauchy-Fantappi\'e form of order 0 generated by 
$\mathcal G$}.


Starting with \eqref{E:reproducing-G} we might hope to use Stokes' theorem to show that 
\begin{equation}\label{E:representing-B_1-eps}
\displaystyle B^1_\epsilon (f)(z) = \frac{1}{(2\pi i)^n} \int_{bD}\limits f(w)
\left(G_\epsilon \wedge \left( \overline{\partial}_w G_\epsilon \right)^{n-1}\right)\!(w, z),\quad z\in D
\end{equation}
 and hence that 
 \begin{equation}\label{E:reproducing-B-1-eps}
 f(z)\ =\ B^1_\epsilon (f)(z),\quad z\in D
 \end{equation} at least for holomorphic $f$ that are of class $C^1(\overline{D})$.
The problem is that $G_\epsilon$ is not a generating form
(it is not of class $C^2(\overline{D})$, and it does not satisfy
\eqref{E:generating-G}),
 nor do we want to restrict ourselves to the $f$ that are in $C^1 (\overline{D})$, but rather to the holomorphic $f$ that are in (say) $L^1 (D)$. We get around these obstacles in two stages. First, establishing 
 \eqref{E:reproducing-B-1-eps}
 for $f$ in $C^1(\overline{D})$, and then relaxing this requirement on $f$. Our result is as follows
\begin{proposition}\label{P:reproducing-B-1-eps}
   Suppose $f$ is holomorphic in $D$ and belongs to $L^1 (D)$. 
   Then, the identity \eqref{E:reproducing-B-1-eps}
holds in $D$, with $B^1_\epsilon$ given by 
\eqref{E:def-B-1-eps}.
\end{proposition}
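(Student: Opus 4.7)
The plan is to follow the two-stage strategy the authors outline: first establish \eqref{E:reproducing-B-1-eps} for $f$ that are holomorphic in $D$ and $C^1$ up to $\overline{D}$ via Stokes' theorem and Lemma \ref{L:generating-G}, then relax to $f \in \vartheta L^1(D)$ by exhausting $D$ from within by the subdomains $D_s = \{\rho < -s\}$ and passing to the limit $s \to 0$.

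For the first stage, I would start from the observation that
\begin{equation*}
f(w)\,(\overline{\partial}_w G_\epsilon)^n(w,z) \;=\; d_w\bigl(f(w)\,G_\epsilon \wedge (\overline{\partial}_w G_\epsilon)^{n-1}\bigr),
\end{equation*}
which holds because $\overline{\partial}^2 = 0$, because $\overline{\partial} f = 0$, and because $\partial f \wedge G_\epsilon \wedge (\overline{\partial}_w G_\epsilon)^{n-1}$ has bidegree $(n+1,n-1)$ and hence vanishes in $\mathbb{C}^n$. For $z \in D$, the bound \eqref{E:g-eps-bounds} gives $2\,\Re g_\epsilon(w,z) \geq -\rho(z) > 0$ for every $w \in \overline{D}$, so $G_\epsilon = \eta_\epsilon/g_\epsilon$ is nonsingular and $C^1$ in $w$ on $\overline{D}$. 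Stokes' theorem then yields $\int_D f\,(\overline{\partial}_w G_\epsilon)^n = \int_{bD} f\,G_\epsilon \wedge (\overline{\partial}_w G_\epsilon)^{n-1}$. On $bD$ one has $\rho \equiv 0$, hence $g_\epsilon = \langle \eta_\epsilon, w-z\rangle$ and $G_\epsilon$ coincides there with the generating form $\widehat{G}_\epsilon$; Lemma \ref{L:generating-G} converts the boundary integral into $(2\pi i)^n f(z)$.

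For the second stage, fix $z \in D$, let $f \in \vartheta L^1(D)$, and take $s > 0$ small enough that $z \in D_s$. Because $f$ is holomorphic in the open neighborhood $D$ of $\overline{D_s}$, Stage 1 applied to $D_s$ with defining function $\rho_s = \rho + s$ and the corresponding Cauchy-Fantappi\'e function $g^{(s)}_\epsilon = g_\epsilon - s$ gives $f(z) = (2\pi i)^{-n}\int_{D_s} f\,(\overline{\partial}_w G^{(s)}_\epsilon)^n$, where $G^{(s)}_\epsilon = \eta_\epsilon/g^{(s)}_\epsilon$; note that on $bD_s$ one still has $G^{(s)}_\epsilon = \widehat{G}_\epsilon$, so Lemma \ref{L:generating-G} applies as before. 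Splitting
\begin{equation*}
\int_{D_s} f\,(\overline{\partial}_w G^{(s)}_\epsilon)^n \;=\; \int_{D_s} f\,(\overline{\partial}_w G_\epsilon)^n \;+\; \int_{D_s} f\,\bigl[(\overline{\partial}_w G^{(s)}_\epsilon)^n - (\overline{\partial}_w G_\epsilon)^n\bigr],
\end{equation*}
I would let $s \to 0$. The hard part will be controlling these two pieces, and both rely on $z$ being fixed strictly inside $D$: Proposition \ref{P:g-g-eps} then gives $|g_\epsilon(w,z)| \gtrsim |\rho(z)| > 0$ uniformly in $w \in \overline{D}$, so $(\overline{\partial}_w G_\epsilon)^n(\cdot,z)$ is bounded on $\overline{D}$ and dominated convergence forces the first term to $\int_D f\,(\overline{\partial}_w G_\epsilon)^n = (2\pi i)^n B^1_\epsilon(f)(z)$; the same uniform lower bound shows that $G^{(s)}_\epsilon - G_\epsilon = s\,\eta_\epsilon/(g_\epsilon(g_\epsilon - s))$ together with its $\overline{\partial}_w$-derivatives is $O(s)$ on $\overline{D}$, so the second term is bounded by $O(s)\,\|f\|_{L^1(D)} \to 0$. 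Combining these limits gives $f(z) = B^1_\epsilon(f)(z)$.
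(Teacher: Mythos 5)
Your two-stage plan is the right one, and your Stage 2 (exhaustion by the subdomains $D_\lambda$ and the splitting into three integrals) matches the paper's argument. The gap is in Stage 1: you apply Stokes' theorem to $f\,G_\epsilon \wedge (\overline{\partial}_w G_\epsilon)^{n-1}$ and then invoke Lemma \ref{L:generating-G} for $\widehat{G}_\epsilon$, but neither step is legitimate at the stated regularity. Since $\rho$ is only $C^2$, the coefficients of $\eta_\epsilon$ (which involve $\partial\rho(w)$) are only $C^1$ in $w$, hence $G_\epsilon$ is $C^1$, $\overline{\partial}_w G_\epsilon$ is merely continuous, and the $(n,n-1)$-form $G_\epsilon \wedge (\overline{\partial}_w G_\epsilon)^{n-1}$ is $C^0$ but not $C^1$; its exterior derivative exists only distributionally, so the classical Stokes identity $\int_D d\omega = \int_{bD}\omega$ cannot be applied directly. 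For the same reason $\widehat{G}_\epsilon$, though it does satisfy the normalization \eqref{E:generating-G}, is \emph{not} a generating form in the sense of the paper's definition — which explicitly requires $C^2$ regularity in $w$ — and Lemma \ref{L:generating-G}, whose proof again rests on Stokes with a form built from $\mathcal G$ and its first $\overline{\partial}_w$-derivatives, does not apply to it. The paper itself flags exactly this obstruction (``The problem is that $G_\epsilon$ is not a generating form (it is not of class $C^2(\overline{D})$\dots)'').

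What is missing is the regularization step: approximate $\rho$ by a sequence $\rho^r \in C^3$ converging to $\rho$ together with derivatives through order two, form $\eta^r_\epsilon$ with $\partial\rho^r$ in place of $\partial\rho$ (but keeping the already-$C^2$ matrix $\tau^\epsilon$), and set $G^r_\epsilon = \eta^r_\epsilon / g^r_\epsilon$. These forms \emph{are} $C^2$ in $w$, so Stokes' theorem and Lemma \ref{L:generating-G} apply to $G^r_\epsilon$ and $\widehat G^r_\epsilon$, yielding $f(z) = (2\pi i)^{-n}\int_D f(\overline{\partial}_w G^r_\epsilon)^n$ for holomorphic $f \in C^1(\overline D)$. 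Since $G^r_\epsilon \to G_\epsilon$ together with first $w$-derivatives, uniformly on $\overline D$ for each fixed $z \in D$, one may then let $r\to\infty$ to obtain \eqref{E:reproducing-B-1-eps} for such $f$. With that Stage 1 in hand, your Stage 2 argument — which only ever invokes Stage 1 on the compactly contained domains $D_\lambda$, where $f$ is automatically $C^\infty$ up to the closure — goes through as you wrote it.
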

The first step is to approximate the defining function $\rho$ (which is of class $C^2$) by a sequence $\{ \rho^r \}$ of functions each of class $C^3$, so that the $\rho^r$ and their derivatives of order $\leq 2$ converge uniformly to $\rho$. Recalling the definition of $\eta_\epsilon$ in \eqref{E:def-eta-eps} we set 
\begin{align}\label{E:def-eta-r-eps}
   \eta^r_\epsilon (w,z) = \  & 
   \chi \left( \partial \rho^r (w) - \frac{1}{2} \sum_{j, k} \tau^\epsilon_{j, k} (w) (w_k - z_k )\, dw_j \right)\\
& 
+ (1 - \chi ) \sum_{j}\left( \overline{w}_j - \overline{z}_j \right) dw_j ,\qquad 
\text{and}\notag
\end{align}

\begin{equation*}
g^r_\epsilon (w, z) = \langle \eta^r_\epsilon (w, z), w - z \rangle - \rho (w)
\end{equation*}
\medskip

 We note that for $z \in D$, the $\eta^r_\epsilon (w, z) \to \eta_\epsilon (w, z)$ and $g^r_\epsilon (w, z) \to g_\epsilon (w, z)$ as $r \to \infty$, uniformly for $w \in \overline{D}$ together with their first derivatives in $w$. 
 Moreover by \eqref{E:g-eps-bounds},
 we have $g^r_\epsilon (w, z) \neq 0$ when $w \in bD$, for sufficiently large $r$. We then set 
 \begin{equation}\label{E:def-G-r-eps}
 G^r_\epsilon(w, z) = \frac{\,\eta^r_\epsilon (w, z)\,}{g^r_\epsilon (w, z)} \quad \text{and}\quad 
 G_\epsilon(w, z) = \frac{\,\eta_\epsilon (w, z)\,}{g_\epsilon (w, z)}
 \end{equation}
 and we see that the (1,0)-forms $G^r_\epsilon$ converge to $G_\epsilon$
  uniformly in $\overline{D}$, together with their first derivatives. Finally each $G^r_\epsilon$ has continuous derivatives of order $\leq 2$ for $w \in \overline{D}$, (because $\rho^r$ was of class $C^3$ and $\tau^\epsilon$ of class $C^2$).

Now assume $f$ is holomorphic in $D$ and belongs to $C^1 (\overline{D})$. Then $\overline{\partial}_w \left( f \,G^r_\epsilon \wedge \left(  \overline{\partial}_w G^r_\epsilon \right)^{n-1} \right) = f \,\overline{\partial}_w \left( G^r_\epsilon \wedge \left(  \overline{\partial}_w G^r_\epsilon \right)^{n-1} \right) = f\, \left( \overline{\partial}_w G^r_\epsilon \right)^n$, since $\overline{\partial}_w \left( \overline{\partial}_w G^r_\epsilon \right)^{n-1} =  0$. And since $f G^r_\epsilon \wedge \left( \overline{\partial}_w G^r_\epsilon \right)^{n-1}$ is an $(n, n-1)$-form, 
$$\overline{\partial}_w \left( f\, G^r_\epsilon \wedge \left( \overline{\partial}_w G_\epsilon^r \right)^{n-1} \right)\ =\ d_w \left( f\, G_\epsilon^r \wedge \left( \overline{\partial}_w G_\epsilon^r \right)^{n-1} \right).$$

\noindent So Stokes' theorem gives
\begin{equation}\label{E:Stokes}
   \int_D f(w) \left( \overline{\partial}_w G^r_\epsilon \right)^n = \int_{bD} f(w) \,G^r_\epsilon \wedge \left(\overline{\partial}_w G^r_\epsilon \right)^{n-1}.
\end{equation} 
One notes that when $w \in bD$, we have $G^r_\epsilon = \widehat{G}^r_\epsilon$ where we have defined
\begin{equation}\label{E:def-G-hat-eps-r}
\widehat{G}^r_\epsilon (w, z)\ =\ 
\frac{\eta^r_\epsilon (w, z)}{g^r_\epsilon (w, z) + \rho (w)}\ =\ 
\frac{\eta^r_\epsilon (w, z)}{\langle\eta^r_\epsilon (w, z), w-z\rangle}
\end{equation}
Also one checks that 
$$
G^r_\epsilon \wedge \left( \overline{\partial}_w G_\epsilon^r \right)^{n-1}=\ \widehat{G}_\epsilon^r \wedge \left( \overline{\partial}_w \widehat{G}_\epsilon^r \right)^{n-1}\quad \text{when}\quad  w \in bD.
$$

However now, $\widehat{G}^r_\epsilon$ is a generating form over $D$
(it is of class $C^2(D)$ in the variable $w$, and it satisfies 
\eqref{E:generating-G} for each $z\in D$),
 thus by Lemma \ref{L:generating-G} and \eqref{E:Stokes}, 
\begin{equation*}
   f(z) = \frac{1}{(2 \pi i)^n} \int_D\limits f(w) \left( \overline{\partial}_w G_\epsilon^r \right)^n ,\quad z\in D
\end{equation*}
and a passage to the limit, $r \to \infty$, then yields the representation 
\eqref{E:reproducing-B-1-eps}, under the restriction that $f$ is holomorphic and is  in $C^1(\overline{D})$.

To relax this requirement on $f$ we use what we have established, but instead for domains that are interior to our domain $D$, proceeding as follows:
for any positive $\lambda$, define $D_\lambda$ by 
\begin{equation*}
D_\lambda = \{ w: \rho (w) + \lambda < 0 \}.
\end{equation*}
Thus $D_\lambda$ arises by replacing the defining function $\rho$ by $\rho_\lambda = \rho + \lambda$. Note that $\overline{D}_\lambda \subset D_0$ and more generally $\overline{D}_{\lambda^\prime} \subset D_\lambda$ \ if \ $\lambda^\prime > \lambda$.

Returning to $\displaystyle g_\epsilon$ and $\eta_\epsilon$ if we define 
\begin{equation}\label{E:def-G-tilde-eps-lambda}
\widetilde{G}_\epsilon^\lambda (w, z)\ =\ 
\frac{\,\eta_\epsilon (w, z)\,}{g_\epsilon (w, z) - \lambda}
\end{equation}
  then whenever $f$ is holomorphic in $D$, we have by what has already been proved,
\begin{equation*}
   f(z) = \frac{1}{{(2 \pi i)}^n} \int_{D_\lambda}\limits f(w) \left( \overline{\partial}_w \widetilde{G}_e^\lambda \right)^{n-1}   \ , \ \text{ whenever } z \in D_\lambda ,
\end{equation*} 
because $f$ is automatically in $C^1(\overline{D}_\lambda )$.\\

However it is also clear that for fixed $z \in D$ and for fixed $\lambda(z)$ chosen so that $z\in D_\lambda$ for all 
$0<\lambda<\lambda (z)$, that 
\begin{equation*}
   \widetilde{G}^\lambda_\epsilon \to G_\epsilon , \ \text{as } \lambda \to 0 \ , \text{ uniformly for  } w \in D_{\lambda (z)} \ ,
\end{equation*}
with a corresponding convergence of derivatives of order one.
However if $z \in D$, 
\begin{align*}
   (2 \pi i)^n f(z) = &\int_{D_\lambda}\limits f(w) \left( \overline{\partial}_w \widetilde{G}_\epsilon^\lambda \right)^n = \int_D\limits f(w) \left( \overline{\partial}_w G_\epsilon \right)^n\\
&+ \int_{D_\lambda} f(w) \left[ \left( \overline{\partial}_w \widetilde{G}_\epsilon^\lambda \right)^n - \left( \overline{\partial}_w G_\epsilon \right)^n \right] + \int _{D\setminus D_\lambda}\limits f \left( \overline{\partial}_w G_\epsilon \right)^n
\end{align*}
Now the second integral on the right tends to zero because of the nature of the convergence of $\overline{\partial}_w \widetilde{G}_\epsilon^\lambda$ to $\overline{\partial}_w G_\epsilon$, as $\lambda \to 0$; and the third integral tends to zero in view of the assumption that $f$ is integrable on $D$. Proposition 4 is therefore proved.
\subsection{The correction}\label{Ss:correction}
While the operator $B^1_\epsilon$ satisfies the reproducing property 
\eqref{E:reproducing-B-1-eps},
the function $B^1_\epsilon (f)$ is not necessarily holomorphic for general $f$ that are not holomorphic. This is because $G_\epsilon (w, z) = \eta_\epsilon (w, z) /g_\epsilon (w, z)$ is holomorphic in $z$ only for $z$ near $w$ 
(see \eqref{E:def-eta-eps} and \eqref{E:re-def-g-eps}). We now proceed to correct $B^1_\epsilon$ to overcome this difficulty. We write
\begin{equation}\label{E:def-ker-B-1-eps}
    B^1_\epsilon (w, z) = \left( \overline{\partial}_w G_\epsilon \right)^n(w, z) ,
\end{equation}
so that
\begin{equation*}
  B^1_\epsilon (f)(z) = \frac{1}{(2 \pi i)^n} \int_D f(w)\, B^1_\epsilon (w, z).
\end{equation*}
\begin{proposition}\label{P:correction}
   There is an $(n,n)$ form (in $w),  B^2_\epsilon (w, z)$, depending on $z$, that is continuous for $(w, z) \in \overline{D} \times \overline{D}$, so that if 
\begin{equation*}
   B_\epsilon (f) (z) = \frac{1}{(2 \pi i)^n} \int_D f(w)\! \left( B^1_\epsilon (w, z) + B^2_\epsilon (w, z) \right), \text{ then }
\end{equation*}
\begin{itemize}
\item[(a)] $B_\epsilon (f)(z)$ is holomorphic for $z \in D$, for each $f \in L^1 (D)$.
\label{C2}\item[(b)] In addition, if $f$ is also holomorphic in $D$ then $B_\epsilon (f)(z) = f(z)$, \ for  \ $z \ \in D$.
\end{itemize}
\end{proposition}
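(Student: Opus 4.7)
The key observation is that the failure of $B^1_\epsilon(f)$ to be holomorphic in $z$ is highly localized. In the region $\chi = 1$ (that is, $|w-z|\le \mu/2$), the form $\eta_\epsilon(w,z)$ depends on $z$ only through the polynomials $(z_k-w_k)$, and $g_\epsilon(w,z) = -P^\epsilon_w(z) - \rho(w)$ is a holomorphic polynomial in $z$ minus a constant in $z$; consequently $G_\epsilon(w,z) = \eta_\epsilon(w,z)/g_\epsilon(w,z)$ is holomorphic in $z$ there, and so is $(\overline{\partial}_w G_\epsilon)^n$. Therefore the $(0,1)$-in-$z$, $(n,n)$-in-$w$ form
\[
  \Psi(w,z) := \overline{\partial}_z\!\left[(\overline{\partial}_w G_\epsilon)^n\right]\!(w,z)
\]
vanishes identically on $\{(w,z):|w-z|\le\mu/2\}$. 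Outside this set, $g_\epsilon$ is bounded away from $0$ by \eqref{E:g-eps-bounds}, so $\Psi$ is jointly continuous (in fact smooth in $z$) on $\overline{D}\times\overline{D}$: the only potential singularity of the kernel of $B^1_\epsilon$, lying on the diagonal with $w\in bD$, is exactly the place where $\overline{\partial}_z$ annihilates it.

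\textbf{Constructing the correction.} For $f\in L^1(D)$, differentiation under the integral is justified on compact subsets of $D$, giving
\[
  \phi_f(z) := \overline{\partial}_z B^1_\epsilon(f)(z) \;=\; \frac{1}{(2\pi i)^n}\int_D f(w)\,\Psi(w,z),
\]
a continuous, $\overline{\partial}$-closed $(0,1)$-form on $\overline{D}$. Embed $\overline{D}$ in an open ball $\widetilde{D}$ and fix a $\overline{\partial}$-solution operator $T$ on $\widetilde{D}$ expressed as
\[
  (T\phi)(z)\;=\;\int_{\widetilde{D}} N(z,z')\wedge \phi(z'),
\]
with $N(z,z')$ an integrable kernel (for example the Bochner--Martinelli--Koppelman kernel on the ball) such that $\overline{\partial}_z T\phi = \phi$ on $\widetilde{D}$ and $T$ maps continuous $(0,1)$-forms to continuous functions. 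Extending $\phi_f$ by zero outside $D$, applying Fubini, and setting
\[
  B^2_\epsilon(w,z) \;:=\; -\int_{\widetilde{D}} N(z,z')\wedge \Psi(w,z')
\]
(where $\Psi$ is extended by zero in $z'$ outside $D$), one finds $\frac{1}{(2\pi i)^n}\int_D f(w)\,B^2_\epsilon(w,z) = -(T\phi_f)(z)$. Joint continuity of $B^2_\epsilon$ on $\overline{D}\times\overline{D}$ is inherited from the joint continuity of $\Psi$ together with the uniform (in $z\in\overline{D}$) integrability in $z'$ of $N(z,z')$.

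\textbf{Verification of (a) and (b).} By construction, on $D$,
\[
  \overline{\partial}_z B_\epsilon(f) \;=\; \overline{\partial}_z B^1_\epsilon(f) - \overline{\partial}_z(T\phi_f) \;=\; \phi_f - \phi_f \;=\; 0,
\]
proving (a). For (b), if $f$ is additionally holomorphic in $D$ and in $L^1(D)$, Proposition \ref{P:reproducing-B-1-eps} gives $B^1_\epsilon(f)=f$; hence $\phi_f = \overline{\partial}_z f = 0$ and $T\phi_f = 0$ by linearity, so $B_\epsilon(f)=B^1_\epsilon(f)=f$.

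\textbf{Main obstacle.} The conceptual work was already carried out in Section~\ref{1:Levi}: the design of $g_\epsilon$ and $\eta_\epsilon$ so that $G_\epsilon$ is holomorphic in $z$ wherever $\chi=1$ is precisely what forces $\Psi$ to be non-singular, which in turn makes the regularity of $B^2_\epsilon$ automatic. The only remaining technical point is that the $\overline{\partial}$-solution kernel $N(z,z')$ must have a weak enough singularity to transfer continuity from $\Psi(w,z')$ to $B^2_\epsilon(w,z)$, uniformly up to $\overline{D}\times\overline{D}$; the Bochner--Martinelli--Koppelman kernel on a ball containing $\overline{D}$ does this cleanly.
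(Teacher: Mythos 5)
There is a genuine gap at the heart of your construction: the datum you feed to the $\overline{\partial}$-solution operator is not $\overline{\partial}$-closed on the domain where you solve. You extend $\phi_f$ (equivalently, $\Psi(w,\cdot)$) by zero from $D$ to the ball $\widetilde D$. But $\Psi(w,z')$ does not vanish for $z'\in bD$ with $|z'-w|>\mu/2$, so the zero-extension has a jump across $bD$; its distributional $\overline{\partial}$ is a nonzero current supported on $bD$, and the Bochner--Martinelli--Koppelman homotopy formula then yields $\overline{\partial}_z(T\phi_f)=\phi_f-(\text{a term produced by the jump})$ rather than $\phi_f$. Hence $\overline{\partial}_z B_\epsilon(f)\neq 0$ in general and (a) fails. (Solving on $D$ itself instead of $\widetilde D$ does not help: the Koppelman formula then carries a boundary integral over the merely $C^2$ boundary $bD$, which is exactly the kind of term one cannot control here.) Your effortless proof of (b) is a symptom of the same defect: $\int_D f(w)\Psi(w,z')$ vanishes for $z'\notin D$ only because you decreed $\Psi$ to be zero there.

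The repair --- and this is what the paper does --- is to extend the datum past $bD$ not by zero but by its natural formula. The form $\overline{\partial}_z B^1_\epsilon(w,z)$ remains defined, smooth, and $\overline{\partial}_z$-closed for $z$ in the parabolic region $\mathcal P_w=\{\rho(z)+\rho(w)<\tfrac{c}{2}|z-w|^2\}$, which protrudes outside $\overline D$; near the diagonal it vanishes identically, as you observed. The uniform inclusion $\mathcal P_w\cup\mathcal B_w\supset D_{-\lambda_0}$ of \eqref{E:union} then allows one to define $F(w,\cdot)$ as a closed $(0,1)$-form on a single ($w$-independent) smooth, strongly pseudoconvex domain $\Omega\supset\overline D$ and to solve $\overline{\partial}u=F(w,\cdot)$ there via the $\overline{\partial}$-Neumann operator, whose regularity on the auxiliary smooth $\Omega$ gives the continuity of $B^2_\epsilon$. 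This fixes (a) but makes (b) genuinely nontrivial: one must show $\int_D f(w)F(w,z)=0$ for $z$ in all of $\overline\Omega$, and for $z\in\overline\Omega\setminus D$ this no longer follows from the reproducing property of $B^1_\epsilon$; the paper establishes it by Stokes' theorem on $D\setminus\mathcal B'_z$ together with Range's identity \eqref{E:miracle}. Your observation that $\Psi$ vanishes near the diagonal and is continuous elsewhere is correct and is indeed why $B^2_\epsilon$ comes out continuous, but by itself it does not produce a usable $\overline{\partial}$-datum on a fixed domain containing $\overline D$.
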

The proof proceeds by solving a $\overline{\partial}$-problem for a domain $\Omega$, strictly containing $D$, which has a smooth $(C^\infty )$ boundary and is strongly pseudoconvex. Here the focus will be on the $z$-variable, with $w\in\overline D$ fixed, (a reversal of the attention paid to $w$ and $z$ above).

We define the \textquotedblleft parabolic\textquotedblright \ region 
\begin{equation*}
  \mathcal{P}_w = \left\{ z: \rho (z) + \rho (w) < \frac{c}{2} | z - w |^2 \right\},\quad w\in\overline D.
\end{equation*}
Here $c$ is the constant appearing in 
\eqref{E:g-bounds}. We note that $\mathcal{P}_w \supset D$, and in fact $\mathcal{P}_w \supset \overline{D}$ for $ w \in D$; moreover when $w \in bD$, the exterior of $\mathcal{P}_w$ intersects $\overline{D}$ only at $w$. We also set $\mathcal{B}_w$ to be the open ball centered at $w, \left\{ z: | z - w | < \mu /2 \right\}$ where $\mu$ is the constant which occurs in the inequality 
\eqref{E:g-bounds}. We will use the notation $D_\lambda = \left\{ z: \rho(z) < - \lambda \right\}$ which appeared above, but now for $\lambda$ negative, with $\lambda = - \lambda_0$, and $\lambda_0 > 0$. We have 
\begin{equation}\label{E:union}
  \mathcal{P}_w \cup \mathcal{B}_w \supset D_{- \lambda_0}, \quad \text{ where}\quad  \lambda_0 = c\mu^2/8 .
\end{equation} 

The proof of \eqref{E:union}  divides into two cases: when
 $| z-w | \geq \mu /2$ and when $| z - w | <  \mu /2$. In the first case, if $z \in D_{- \lambda_0}$ then $\rho   (z) < \lambda_0$, and
 since $\rho (w) \leq 0$, it follows from our choice of $\lambda_0$ that $z \in \mathcal{P}_w$. In the second case, $z$ is automatically in $\mathcal{B}_w$, and hence  \eqref{E:union} is proved. It is equally easy to see that we also have
 \begin{equation}\label{E:intersection}
   \mathcal{P}_w \cap \mathcal{B}_w \neq \emptyset ,\quad 
   w\in \overline{D}.
 \end{equation}

Next, by approximating $\rho$, the defining function of $D$, by an appropriate $C^\infty$ function which is close to $\rho$ and its derivatives of order not exceeding two, we can find $\widetilde{\rho}$, so that $\rho + \lambda_0 /2 < \widetilde{\rho} < \rho + \lambda_0$, and so that the domain $\Omega = \{ z: \widetilde{\rho}(z) < 0 \}$ has a boundary that is $C^\infty$ and strongly pseudoconvex. Note that 
\begin{equation}\label{E:Omega-D-lambda}
\overline D\subset D_{- \lambda_0/2} \subset \Omega \subset D_{- \lambda_0}
\end{equation}
and so
we have
\begin{equation*}
   \overline{D} \subset \Omega , \ \text{ and } \ \overline{\Omega} \subset \mathcal{P}_w \cup \mathcal{B}_w , \ \text{ for every } w \in \overline{D} .
\end{equation*} 
\indent We now set up our $\overline{\partial}$-problem on $\Omega$. For each $w \in \overline{D}$ we denote by $F(w, z) = F_\epsilon (w, z)$ the following double form, which is of type $(0, 1)$ in $z$, and of type $(n,n)$ 
in $w$ 
\begin{equation} \label{E:def-d-bar-data}
F(w, z)\ =\
\left\{
\begin{array}{rcl}
- \overline{\partial}_z \left( B^1_\epsilon (w, z) \right),
  &\text{if}&z \in \mathcal{P}_w\\
 \quad &\\
 \quad\  0, &\text{if}&  z\in \mathcal B_w
 \end{array}
 \right.
\end{equation}
In fact, note by \eqref{E:def-eta-eps} and \eqref{E:re-def-g-eps}, that $g_\epsilon (w, z) , \eta_\epsilon (w, z)$
 are holomorphic in $z$ for $z \in \mathcal{B}_w$ and by \eqref{E:g-eps-bounds}, $g_\epsilon (w, z)$ is non-vanishing in $\mathcal P_w\cap \mathcal B_w$.
%
Thus $\overline{\partial}_z \left( B^1_\epsilon (w, z) \right) = 0$ in $\mathcal P_w\cap \mathcal{B}_w$ and so $F(w, z)$ is defined consistently in 
$\mathcal{P}_w \cup \mathcal{B}_w$. It is also clear from this and from
\eqref{E:intersection} that $F(z, w)$ is $C^\infty$ for $z \in \mathcal{P}_w \cup \mathcal{B}_w$, and as such it depends continuously on $w \in \overline{D}$. 
Moreover $\overline{\partial}_z F(w, z) = 0$, 
 for $z \in \mathcal{P}_w \cup \mathcal{B}_w , \ w \in \overline{D}$.\\

Now let $S = S_z$ be the solving operator, giving the normal solution of the problem $\overline{\partial} u = \alpha$ in $\Omega$, via the $\overline{\partial}$-Neumann problem, so that $u = S(\alpha)$ satisfies the above whenever $\alpha$ is a $(0, 1)$-form with $\overline{\partial} \alpha = 0$. We set
 \begin{equation}\label{E:def-correction}
 B^2_\epsilon (w, z) = S_z (F(w, \cdot) ).
 \end{equation}
  Then by the regularity properties of $S$ 
for which (see e.g. \cite[chapters 4 and 5]{CS}, or \cite{FK}), we have that $B^2_\epsilon (w, z)$ is $C^\infty (\overline{\Omega})$, as a function of $z$, and continuous for $w \in \overline{\Omega}$. Moreover 
$\overline{\partial}_z \left( B^2_\epsilon (w, z ) \right) 
= -\overline\partial_zB^1_\epsilon (w, z)$,
 for $z\in D$, so $\overline{\partial}_z \left( B_\epsilon (w, z) \right) = 0$ for $ z \in D$. 
So conclusion
(a) is proved. To establish conclusion (b) it suffices to see that 
\begin{equation}\label{E:C2}
   \int_D f(w)F(w, z) = 0 \quad \text{ for } \quad z \in \overline{\Omega} ,
\end{equation} 
whenever $f \in 
\vartheta
L^1(D)$.
In fact if \eqref{E:C2} holds, then 
$$
\int_D f(w)B^2_\epsilon (w,z)= 
 \int_D f(w)S_z (F (w, z)) = 
S_z\left(\int_D f(w)F(w, z)\right)=0
$$
thus (b) is a consequence of \eqref{E:C2}.
Note that we have 
$$\int_Df(w) F(w,z) = - \int_D  \!f(w)\,\overline{\partial}_z B^1_\epsilon (w,z) = 0\quad \text{for}\quad  z \in D,
$$ but this does not suffice to give 
\eqref{E:C2} (for $z \in \overline{\Omega}$). To prove 
\eqref{E:C2} we need to use Stokes' theorem, and as in the proof of 
Proposition \ref{P:reproducing-B-1-eps}, we assume initially that $f \in C^1 (\overline{D})$, besides being holomorphic. \\

Now set $\displaystyle G^r_\epsilon = \eta^r_\epsilon /g^r_\epsilon$ as in the proof of Proposition \ref{P:reproducing-B-1-eps}, with $g^r_\epsilon (w,z) = \langle \eta^r_\epsilon (w, z), w - z \rangle - \rho(w)$, see
 \eqref{E:def-eta-r-eps} -- \eqref{E:def-G-r-eps}.
Also, set 
\begin{equation*}
F^r(w, z)\ =\
\left\{
\begin{array}{rcl}
- \overline{\partial}_z\, \overline{\partial}_w\! \left({G}^r_\epsilon \wedge \left( \overline{\partial}_w {G}^r_\epsilon \right)^{n-1}\right)\!,&\text{if}& z \in \mathcal{P}_w\\
 0, &\text{if}&  z\in \mathcal B_w.
 \end{array}
 \right.
\end{equation*}
Now
\begin{equation*}
   \int_D f(w)F^r (w,z)= 
   \int\limits_{D\setminus\mathcal{B}_w}\!\! f(w) F^r(w,z)=
   \int\limits_{D\setminus\mathcal{B}^\prime_z}\!\! f(w)F^r(w,z)
\end{equation*}
where $\mathcal{B}^\prime_z = \{ w: | w - z | < \mu /2 \}$, \ \ since \ \ $w \in \mathcal{B}^\prime_z$ \ \ exactly when $z \in \mathcal{B}_w$.\\

\noindent However the boundary of $D\setminus\mathcal{B}^\prime_z$ consists of two parts, denoted $I$ and $II$ below, where 
$$I=bD \cap (\mathcal{B}^\prime_z )^c,\qquad \text{and}\qquad II=D \cap b \mathcal{B}^\prime_z.$$
Now

$$\overline{\partial}_w \left( f(w)\!\left( G_r^r \wedge \overline{\partial}_w G^r_\epsilon \right)^{n-1}\right) =
\displaystyle d_w \left( f(w) \,G^r_\epsilon \wedge \left( \overline{\partial}_w G^r_\epsilon \right)^{n-1}\right) $$
\noindent since $f$ is holomorphic and $G^r_\epsilon \wedge \left( \overline{\partial}_w G^r_\epsilon \right)^{n-1}$ is an $(n, n-1)$-form, so Stokes' theorem gives us that 
\begin{equation*}
   \int\limits_{D\setminus \mathcal B^\prime_z}\!\!f(w) F^r(w,z) = \left( \int_I\limits + \int_{II}\limits \right)
   \!f(w)\,\overline\partial_z\! \left( G^r_\epsilon \wedge  \overline{\partial}_w G^r_\epsilon \right)^{n-1} .
\end{equation*}
But the integral over $II$ vanishes since $G^r_\epsilon$ is holomorphic in $z$, for $z \in \mathcal{B}_w$, that is $w \in \mathcal B^\prime_z$. To treat the integral over $I$ we involve the modification $\widehat{G}^r_\epsilon$
that was defined in \eqref{E:def-G-hat-eps-r}.
Then since $I \subset bD$, the same argument as before shows
\begin{equation*}
   \int_I\limits = \int\limits_If(w)\,\overline\partial_z\!\left( \widehat{G}^r_\epsilon \wedge \left( \overline\partial_w\widehat{G}_\epsilon^r \right)^{n-1} \right).
\end{equation*}
However $\widehat{G}^r_\epsilon$ is a generating form over $D$ so the following identity holds for it.
\begin{lemma}\cite[Lemma IV.3.5]{R}
   \begin{equation}\label{E:miracle}
  \overline\partial_z\!\left( \widehat{G}^r_\epsilon \wedge \left( \overline{\partial}_w\widehat G^r_\epsilon \right)^{n-1} \right) =\ d_w (H (w,z))
\end{equation}
where $\displaystyle H$
 is the double form of type
$(0,1)$ in $z$, and of type $(n, n - 2 )$ 

\noindent in  $w$
\begin{equation*}
   H(w, z) = - (n - 1) \,\widehat{G}^r_\epsilon \wedge \left( \overline{\partial}_w \widehat{G}^r_\epsilon \right)^{n-2} \wedge \overline{\partial}_z \widehat{G}^r_\epsilon .
\end{equation*}
\end{lemma}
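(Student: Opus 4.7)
The plan is to abbreviate $\alpha = \widehat{G}^r_\epsilon$, $\beta = \overline\partial_w\alpha$, $\gamma = \overline\partial_z\alpha$, reduce the identity to a purely algebraic vanishing statement via the graded Leibniz rule, and then extract that vanishing from the defining property of the generating form by contracting along $w - z$. As a preliminary reduction, note that $H = -(n-1)\,\alpha\wedge\beta^{n-2}\wedge\gamma$ has $w$-bidegree $(n, n-2)$, so the $\partial_w$-part of $d_w H$ would sit in the empty bidegree $(n+1, n-2)$ and vanishes identically; hence $d_w H = \overline\partial_w H$, and the claim reads $\overline\partial_z(\alpha\wedge\beta^{n-1}) = \overline\partial_w H$.

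The operators $\overline\partial_w$ and $\overline\partial_z$ anticommute (expand $0 = (\overline\partial_w + \overline\partial_z)^2$ and use $\overline\partial_w^2 = \overline\partial_z^2 = 0$), which gives $\overline\partial_w\beta = 0$ and $\overline\partial_z\beta = -\overline\partial_w\gamma$. Repeated application of the graded Leibniz rule (sign $(-1)^p$ for a $p$-form in the first slot) then produces
\[
\overline\partial_z(\alpha\wedge\beta^{n-1}) = \gamma\wedge\beta^{n-1} + (n-1)\,\alpha\wedge\beta^{n-2}\wedge\overline\partial_w\gamma,
\]
\[
\overline\partial_w H = -(n-1)\,\beta^{n-1}\wedge\gamma + (n-1)\,\alpha\wedge\beta^{n-2}\wedge\overline\partial_w\gamma.
\]
Because $\gamma$ and $\beta^{n-1}$ each carry even total degree, $\gamma\wedge\beta^{n-1} = \beta^{n-1}\wedge\gamma$, and subtracting the second display from the first collapses the entire problem to the single algebraic identity $n\,\beta^{n-1}\wedge\gamma = 0$.

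To establish this vanishing I would exploit the defining relation $\sum_{j=1}^n \widehat{G}^r_{\epsilon, j}(w_j - z_j) = 1$. Differentiating in $\overline w_k$ and in $\overline z_\ell$, and using that $w_j - z_j$ is holomorphic in each variable, gives $\sum_j(w_j - z_j)\,\partial\widehat{G}^r_{\epsilon, j}/\partial\overline w_k = 0$ for every $k$, and likewise for $\overline z_\ell$. Equivalently, the holomorphic vector field $V = \sum_j(w_j - z_j)\,\partial/\partial w_j$ annihilates both $\beta$ and $\gamma$ under contraction, $\iota_V\beta = 0 = \iota_V\gamma$, and by the antiderivation property of $\iota_V$ one has $\iota_V(\beta^{n-1}\wedge\gamma) = 0$. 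Since $\beta^{n-1}\wedge\gamma$ has $w$-bidegree $(n, n-1)$, its $dw$-part is forced to be the top-degree form $\sigma := dw_1\wedge\cdots\wedge dw_n$; writing $\beta^{n-1}\wedge\gamma = \sigma\wedge\Psi$ with $\Psi$ built only from $d\overline w$ and $d\overline z$ (so that $\iota_V\Psi = 0$), the contraction becomes $(\iota_V\sigma)\wedge\Psi$. For $w \neq z$ the form $\iota_V\sigma = \sum_j(-1)^{j-1}(w_j - z_j)\,dw_1\wedge\cdots\wedge\widehat{dw_j}\wedge\cdots\wedge dw_n$ is a nonzero $(n-1, 0)$-form, and the wedge of an antiholomorphic monomial $d\overline w_I\wedge d\overline z_J$ against it remains linearly independent across $(j, I, J)$. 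Hence $(\iota_V\sigma)\wedge\Psi = 0$ forces $\Psi_{IJ} = 0$ for every $(I, J)$, i.e., $\beta^{n-1}\wedge\gamma = 0$ off the diagonal, which suffices since the identity is invoked for $w \in bD$, $z \in D$.

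The principal obstacle lies in arranging the Leibniz expansion of the two sides so that every contribution cancels except for the single residue $n\,\beta^{n-1}\wedge\gamma$, and then recognizing that exactly the generating identity $\langle\widehat{G}^r_\epsilon, w - z\rangle = 1$ produces, through the contraction $\iota_{w-z}$, the linear-algebraic constraint needed to force this residue to vanish. Sign bookkeeping in the graded Leibniz rule—particularly the $(-1)^{\deg\alpha}$ entering $\overline\partial_z(\alpha\wedge\beta^{n-1})$ and the anticommutation $\overline\partial_z\overline\partial_w = -\overline\partial_w\overline\partial_z$—is the one delicate step most likely to hide an error.
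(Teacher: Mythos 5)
The paper offers no argument for this lemma---it is imported verbatim as a citation to Range's book, so there is no ``paper's proof'' with which to compare. Your proposal is therefore a genuine, self-contained reconstruction, and after checking the details I believe it is correct. Writing $\alpha=\widehat G^r_\epsilon$, $\beta=\overline\partial_w\alpha$, $\gamma=\overline\partial_z\alpha$ on the product space $\mathbb C^n_w\times\mathbb C^n_z$ (so $\overline\partial_w$ and $\overline\partial_z$ anticommute), the graded Leibniz rule with $\deg\alpha=1$ gives
\begin{equation*}
\overline\partial_z(\alpha\wedge\beta^{n-1})=\gamma\wedge\beta^{n-1}-\alpha\wedge\overline\partial_z(\beta^{n-1})
=\gamma\wedge\beta^{n-1}+(n-1)\,\alpha\wedge\beta^{n-2}\wedge\overline\partial_w\gamma,
\end{equation*}
while $\overline\partial_w\beta=0$ yields
\begin{equation*}
\overline\partial_wH=-(n-1)\,\beta^{n-1}\wedge\gamma+(n-1)\,\alpha\wedge\beta^{n-2}\wedge\overline\partial_w\gamma,
\end{equation*}
so both displays are exactly right and the claim reduces to $n\,\beta^{n-1}\wedge\gamma=0$. (Your observation that $d_wH=\overline\partial_wH$ because the $(n+1,n-2)$ slot is empty is also correct.) The contraction step is sound: differentiating $\sum_j\alpha_j(w_j-z_j)=1$ in $\overline w_k$ and in $\overline z_\ell$ gives $\iota_V\beta=\iota_V\gamma=0$ for $V=\sum_j(w_j-z_j)\,\partial/\partial w_j$, hence $\iota_V(\beta^{n-1}\wedge\gamma)=0$; since $\beta^{n-1}\wedge\gamma$ has top $dw$-degree it factors as $\sigma\wedge\Psi$ with $\Psi$ built only from $d\overline w,d\overline z$, and $(\iota_V\sigma)\wedge\Psi=0$ with $\iota_V\sigma\neq 0$ for $w\neq z$ forces $\Psi=0$ (the subalgebra generated by the $dw_j$ and the one generated by the $d\overline w_j,d\overline z_j$ are tensor complements, so a wedge of two nonzero factors cannot vanish). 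This is exactly the kind of argument Range's proof rests on---use the generating identity $\langle\widehat G^r_\epsilon,w-z\rangle=1$ to force an algebraic vanishing---so you have in effect reproduced the cited proof rather than invented a new one. The only point worth flagging is the convention dependence: if one instead uses Range's ``double form'' convention in which $z$-forms and $w$-forms commute, all the intermediate signs change but the final identity $\beta^{n-1}\wedge\gamma=0$ and the statement of the lemma are unaffected, so the proof survives either bookkeeping.
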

Using \eqref{E:miracle}, we find
$$
f(w)\,\overline\partial_z\!\left( \widehat{G}^r_\epsilon \wedge \left( \overline\partial_w\widehat{G}_\epsilon^r \right)^{n-1} \right) =
\ \frac{1}{1-n}\,d_w \big(f(w)H (w,z)\big),
$$
since $f$ is holomorphic.
 So by Stokes' theorem
\begin{equation*}
   \int\limits_I f(w)\,\overline\partial_z\!\left( \widehat{G}^r_\epsilon \wedge \left( \overline{\partial}_w \widehat{G}^r_\epsilon \right)^{n-1} \right) = 
   \frac{1}{1-n}\int\limits_{bI}f(w) H(w,z) =\ 0 ,
\end{equation*}
since 
$$
\overline\partial_z\widehat G^r_\epsilon (w, z) = 0\quad \text{for\ any}\quad 
w\in bI\subset b\mathcal B^\prime_z.
$$
As a result,
 $$
   \int\limits_D\! f(w)\,F^r (w, z) = 0 \quad \text{ for } \ z \in \overline{\Omega} \, ,
   $$
   and a limit as $r \to \infty$ gives \eqref{E:C2} in the case when $f\in\vartheta(D)\cap C^1(\overline D)$.
   
To drop the assumption that $f$ is in $C^1(\overline{D})$ we argue as in the proof of Proposition \ref{P:reproducing-B-1-eps}, by replacing the domain $D$ with defining function $\rho$, by the proper sub-domains $D_\lambda$, with defining function $\rho + \lambda$ for $\lambda$ positive.
With $\widetilde{G}^\lambda_\epsilon$ defined as in 
\eqref{E:def-G-tilde-eps-lambda}, we now set
\begin{equation*}
\tilde F^\lambda(w, z)\ =\
\Bigg\{\!\!
\begin{array}{ll}
- \overline{\partial}_z\, \overline{\partial}_w \big( \widetilde{G}^\lambda_\epsilon \wedge \big( \overline{\partial}_w \widetilde{G}^\lambda_\epsilon \big) ^{n-1}\big),
  &\text{if}\ z \in \mathcal{P}_w\\
 \quad\  0, &\text{if}\  z\in \mathcal B_w.
 \end{array}
\end{equation*}
Then as before,
\begin{equation*}
   \int\limits_{D_\lambda}\!f(w)\,\tilde F^\lambda (w, z) = 0\quad 
   \text{for}\quad \lambda<\lambda (z),
\end{equation*}
if is holomorphic in $D$.
A passage to the limit, $\lambda \to 0$, then gives 
\eqref{E:C2} and the proof of Proposition \ref{P:correction} is complete.
\medskip

\noindent For later applications, it will be useful to note that 
$$B_\epsilon (w, z) = B^1_\epsilon (w, z)  + B^2_\epsilon (w, z) $$
 is holomorphic for $z$ in a neighborhood of $\overline{D}$, when every 
(fixed) $w$ is inside $D$. To make this precise, recall the family of domains $D_\lambda =\{\rho<-\lambda\}$ used above, and the positive number $\lambda_0$ appearing in \eqref{E:union}.
\begin{corollary}\label{C:hol-ext-to-nbhd}
   Suppose $0<\lambda < \lambda_0 / 2$, and $w \in D_\lambda$. Then $B_\epsilon (w, z)$ extends to a holomorphic function for $z \in D_{- \lambda}$.
\end{corollary}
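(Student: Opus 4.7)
The plan is to verify three things in turn: (i) that the integrand $B^1_\epsilon(w,z)$ makes sense as a smooth function of $z$ on $D_{-\lambda}$ when $w\in D_\lambda$; (ii) that $B^2_\epsilon(w,\cdot)$ is already defined and smooth on the whole of $\Omega$, which contains $D_{-\lambda}$; and (iii) that on $D_{-\lambda}$ the $\overline{\partial}_z$-derivatives of $B^1_\epsilon$ and $B^2_\epsilon$ cancel. The point is that all three checks reduce to the single inequality $\rho(w)+\rho(z)<0$, which is automatic from $w\in D_\lambda$ and $z\in D_{-\lambda}$.

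For (i), fix $w\in D_\lambda$ (so $-\rho(w)>\lambda$) and $z\in D_{-\lambda}$ (so $-\rho(z)>-\lambda$). If $|z-w|\geq \mu$ then \eqref{E:g-eps-bounds} gives $2\Re g_\epsilon(w,z)\geq c>0$. If $|z-w|<\mu$, \eqref{E:g-eps-bounds} yields $2\Re g_\epsilon(w,z)\geq -\rho(w)-\rho(z)+c|z-w|^2>0$, the strict positivity coming from $-\rho(w)-\rho(z)>\lambda-\lambda=0$ combined with $c|z-w|^2\geq0$ (and with equality impossible unless $z=w$ and $\rho(z)=-\rho(w)$, which cannot occur since $-\rho(w)>\lambda>-\rho(w)$ would then follow). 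Hence $g_\epsilon(w,z)\neq 0$ throughout $D_{-\lambda}$, so $G_\epsilon(w,z)$ and therefore $B^1_\epsilon(w,z)=(\overline{\partial}_w G_\epsilon)^n(w,z)$ are well-defined and $C^\infty$ in $z$ there.

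For (ii), the hypothesis $\lambda<\lambda_0/2$ combined with \eqref{E:Omega-D-lambda} gives $D_{-\lambda}\subset D_{-\lambda_0/2}\subset \Omega$, so $B^2_\epsilon(w,\cdot)=S_z(F(w,\cdot))$, being $C^\infty$ on $\overline{\Omega}$ by the regularity of the $\overline{\partial}$-Neumann solution operator, is certainly smooth on $D_{-\lambda}$.

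Finally for (iii), I would show that for $w\in D_\lambda$ every $z\in D_{-\lambda}$ lies in $\mathcal{P}_w$: indeed $\rho(z)+\rho(w)<\lambda+(-\lambda)=0\leq \tfrac{c}{2}|z-w|^2$. Thus on $D_{-\lambda}$ the defining formula \eqref{E:def-d-bar-data} gives $F(w,z)=-\overline{\partial}_z B^1_\epsilon(w,z)$, while on $\Omega\supset D_{-\lambda}$ the solving-operator identity yields $\overline{\partial}_z B^2_\epsilon(w,z)=F(w,z)$. Adding the two shows $\overline{\partial}_z\bigl(B^1_\epsilon(w,z)+B^2_\epsilon(w,z)\bigr)=0$ for every $z\in D_{-\lambda}$, which is the desired holomorphic extension of $B_\epsilon(w,\cdot)$ to $D_{-\lambda}$. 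The only place where one must be careful is step (i), ensuring $g_\epsilon\neq 0$ on all of $D_{-\lambda}$; but here the strict inequalities forcing $\rho(w)<-\lambda$ and $\rho(z)<\lambda$ give exactly the room needed in the estimate \eqref{E:g-eps-bounds}.
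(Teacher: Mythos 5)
Your argument is correct and follows essentially the same route as the paper: you verify that $w\in D_\lambda$, $z\in D_{-\lambda}$ implies $z\in\mathcal{P}_w$ and $z\in\Omega$, so that $F(w,z)=-\overline{\partial}_z B^1_\epsilon(w,z)$ and $\overline{\partial}_z B^2_\epsilon(w,z)=F(w,z)$ cancel, which is precisely the paper's proof. (The parenthetical in step (i) about ``$-\rho(w)>\lambda>-\rho(w)$'' is garbled, but it is unnecessary: the strict inequalities $-\rho(w)>\lambda$ and $-\rho(z)>-\lambda$ already give $-\rho(w)-\rho(z)>0$, which with $c|z-w|^2\geq 0$ yields $\Re g_\epsilon(w,z)>0$.)
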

\begin{proof}
Let $F(w, z)$ be as in \eqref{E:def-d-bar-data} and $\Omega$ be as in
\eqref{E:Omega-D-lambda}. Note that by
\eqref{E:def-correction} we have
   $$\overline{\partial}_z B^2_\epsilon (w, z) = F(w, z) \quad \text{for}\quad 
    z \in \Omega,$$ 
  and 
    $$F(w, z) = - \overline{\partial}_z B^1_\epsilon (w, z)\quad \text{for}\quad z \in \mathcal{P}_w\,,
    $$
by the definition of $F(w, z)$. Hence $B_\epsilon (w, z)$ extends to a holomorphic function for $z \in \Omega \cap \mathcal{P}_w$. However when $w \in D_\lambda$, we have that $\mathcal{P}_w \supset D_{-\lambda}$ (because $w \in D_\lambda$ and $z \in D_{- \lambda}$ imply that $\rho (z) + \rho (w) < 0 \leq c/2 | z - w |^2$). Also $\Omega \supset D_{- \lambda_0 /2}$ by our construction, 
see \eqref{E:Omega-D-lambda}.  
Thus

\noindent $\Omega \cap \mathcal{P}_w \supset 
(D_{- \lambda}\cap D_{-\lambda_0/2}) = D_{-\lambda}$ for $\lambda < \lambda_0 /2$, and $B_\epsilon (w, z)$ is holomorphic in $D_{- \lambda}$.
\end{proof}

\section{An $L^p$ estimate}\label{S:L-p-estimate}
We prove a basic $L^p$ inequality needed in what follows. We deal with a comparison operator $\varGamma$ defined by
\begin{equation*}
   \varGamma (f)(z) = \int_D\limits  | g (w, z) |^{-n - 1} f(w)\, dV(w) ,\quad z\in D
\end{equation*} 
where $dV$ is the Euclidean volume element in $\mathbb{C}^n$. We also consider the operator $\varGamma_\epsilon$ defined similarly, with $g$ replaced by $g_\epsilon$. 
\begin{proposition}\label{P:comparison-op}
   For $1 < p  < \infty$, \ we have \\
\begin{equation*}
   \| \varGamma (f) \|_{L^p (D)} \leq c_p \| f \|_{L^p (D)}
\end{equation*}
and also, 
\begin{equation*}
   \| \varGamma_\epsilon (f) \|_{L^p (D)} \leq c_p \| f \|_{L^p (D)}
\end{equation*}
with a bound $c_p$ independent of $\epsilon$.
\end{proposition}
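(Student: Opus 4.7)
The plan is to apply Schur's test. Since Proposition~\ref{P:g-g-eps}(b) yields $|g_\epsilon|^{-n-1}\approx|g|^{-n-1}$ with constants independent of $\epsilon$, the bound for $\varGamma_\epsilon$ will follow from that for $\varGamma$ with the same constant $c_p$, so I treat $\varGamma$ only. Using the Schur weight $\phi(w)=(-\rho(w))^{-\beta}$ with $0<\beta<1/\max(p,p')$, and invoking the symmetry $|g(w,z)|\approx|g(z,w)|$ from \eqref{E:g-g-eps-symm}, both Schur conditions for the positive kernel $|g(w,z)|^{-n-1}$ collapse to the single auxiliary estimate
\begin{equation*}
\int_D \frac{(-\rho(w))^{-\alpha}\,dV(w)}{|g(w,z)|^{n+1}}\;\leq\; C_\alpha\,(-\rho(z))^{-\alpha},\qquad z\in D,\quad 0<\alpha<1.
\end{equation*}
Indeed, with $\alpha=\beta p'$ this is the first Schur inequality, and the second follows by swapping the roles of $z$ and $w$ using the symmetry. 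For $1<p<\infty$ the range $0<\beta<1/\max(p,p')$ is nonempty and both exponents $\beta p',\beta p$ then lie in $(0,1)$, making the auxiliary estimate applicable.

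To prove the auxiliary estimate, I would localize near $bD$: the contribution from any compact subset of $D$ is trivially controlled because $|g|$ is bounded below there. Near a fixed point $z_0\in bD$ introduce real coordinates $(r,t,u)\in\mathbb{R}_+\times\mathbb{R}\times\mathbb{R}^{2n-2}$, where $r=-\rho$ is the normal distance, $t$ is the one real tangential coordinate aligned with $\Im\langle\partial\rho(z_0),\,\cdot\,\rangle$, and $u$ parametrizes the remaining $2n-2$ complex tangential directions. After translating the tangential variables so that $z$ has coordinates $(r_z,0,0)$ with $r_z=-\rho(z)$, Proposition~\ref{P:g-g-eps}(a) (together with the comparability of $\Im\langle\partial\rho(w),w-z\rangle$ and $\Im\langle\partial\rho(z),w-z\rangle$ modulo $|w-z|^2$ recorded just before Proposition~\ref{P:g-eps-g-eps-conj}) gives
$$|g(w,z)|\;\gtrsim\;r+r_z+|t|+|u|^2.$$
Now integrate in the order $t,\,u,\,r$: the one-dimensional $t$-integration drops the exponent by one, yielding $(r+r_z+|u|^2)^{-n}$; passing to polar coordinates in $\mathbb{R}^{2n-2}$ and substituting $v=|u|^2$ leaves $\int_0^\infty v^{n-2}(r+r_z+v)^{-n}\,dv\lesssim(r+r_z)^{-1}$; finally $\int_0^c r^{-\alpha}(r+r_z)^{-1}\,dr\lesssim r_z^{-\alpha}$ for $0<\alpha<1$. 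A partition of unity over a finite boundary cover assembles the local pieces into the global estimate.

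The main technical obstacle is the adapted-coordinate setup and the clean execution of the three-stage iterated integral; however the substantive geometric input has already been packaged into Proposition~\ref{P:g-g-eps}, which is what absorbs the limitation to $C^2$ regularity of $\rho$. Once the auxiliary estimate is in hand, Schur's test yields the $L^p(D)$ bound for $\varGamma$, and the analogous bound for $\varGamma_\epsilon$ follows with the same constant $c_p$ from the pointwise comparability of the two kernels.
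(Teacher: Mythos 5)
Your proposal is correct and follows essentially the same route as the paper: a Schur-type test with weight $(-\rho)^{-\beta}$ reduced to the auxiliary estimate $\int_D |g(w,z)|^{-n-1}(-\rho(w))^{-\alpha}\,dV(w)\lesssim(-\rho(z))^{-\alpha}$ for $0<\alpha<1$, proved in adapted boundary coordinates where $|g(w,z)|\gtrsim -\rho(w)-\rho(z)+|u_n|+|w'|^2$, the only differences being cosmetic (the order of the iterated integration; the paper rescales and integrates in $s=-\rho(w)$ first). The one point to make explicit in the off-diagonal/interior region is that, although $|g|$ is bounded below there, the weight $(-\rho(w))^{-\alpha}$ still blows up at $bD$, so you also need the elementary fact that $\int_D(-\rho(w))^{-\alpha}\,dV(w)<\infty$ for $\alpha<1$.
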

Here $\displaystyle{\|f\|_{L^p(D)}}$ denotes the norm 
of $f$ in the Lebesgue space $L^p(D)$ with respect to
 the Euclidean measure in $\mathbb R^{2n}$.
It is clear that by Proposition \ref{P:g-g-eps}, the result for $\varGamma_\epsilon$ is immediately reducible to that for $\varGamma$, which in turn is a consequence of the following.
\begin{lemma}\label{L:comparison-op}
For each $\alpha , \ 0 < \alpha < 1$
\begin{equation}\label{E:int-g-est-z}
   \int\limits_D | g(w, z) |^{-n-1} | \rho (w) |^{-\alpha} dV(w) \leq c_\alpha | \rho (z) |^{- \alpha} .
\end{equation}    
\end{lemma}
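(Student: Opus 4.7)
The starting point is the lower bound from Proposition \ref{P:g-g-eps}(a),
\begin{equation*}
|g(w,z)|\gtrsim |\rho(w)| + |\rho(z)| + |\Im\langle\partial\rho(w), w-z\rangle| + |w-z|^2.
\end{equation*}
The region $|w-z|\geq\mu$ is handled immediately: there $|g(w,z)|\geq c$ by \eqref{E:g-bounds}, and its contribution is bounded by a constant multiple of $\int_D |\rho(w)|^{-\alpha}\,dV(w)$, which is finite uniformly in $z$ since $\alpha<1$.

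For the essential region $|w-z|<\mu$ I would use a finite partition of unity on $\overline D$, and in each patch near $bD$ centered at a boundary point $\zeta_0$ introduce real coordinates $w\mapsto (r, t, y)\in\mathbb{R}\times\mathbb{R}\times\mathbb{R}^{2n-2}$ with $r=-\rho(w)$, $t(w)=\Im\langle\partial\rho(\zeta_0), w-\zeta_0\rangle$, and $y$ a linear parametrization of the complex tangent plane at $\zeta_0$. Since $|\nabla\rho|>0$ near $bD$ and $\rho\in C^2$, this is a $C^1$ diffeomorphism onto a neighborhood of the origin with Jacobian bounded above and below. A $C^1$-Taylor expansion of $\partial\rho$ gives
\begin{equation*}
\Im\langle\partial\rho(w), w-z\rangle = \bigl(t(w)-t(z)\bigr) + O\bigl(|w-\zeta_0|\cdot|w-z|\bigr),
\end{equation*}
and choosing $\zeta_0$ to be the boundary projection of $z$ makes the error $O\bigl(|w-z|^2 + |\rho(z)|\,|w-z|\bigr)$, which is absorbed into the native $|w-z|^2$ and $|\rho(z)|$ summands of the lower bound. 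After translating so that $z$ has coordinates $(|\rho(z)|, 0, 0)$, the bound takes the model form $|g(w,z)|\gtrsim s + r + |t| + |y|^2$, with $s:=|\rho(z)|$.

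It then suffices to show
\begin{equation*}
\int_0^\mu\!\!\int_{|t|<\mu}\!\!\int_{|y|<\mu} \frac{r^{-\alpha}\,dy\,dt\,dr}{(s+r+|t|+|y|^2)^{n+1}} \lesssim s^{-\alpha},
\end{equation*}
which I carry out in three successive Fubini integrations. Spherical coordinates in $y\in\mathbb{R}^{2n-2}$ followed by the substitution $u=|y|^2$ reduce the innermost integral to a Beta-type integral bounded by $(s+r+|t|)^{-2}$. Integration in $t$ then yields a gain of $(s+r)^{-1}$, and the remaining one-dimensional integral $\int_0^\mu r^{-\alpha}(s+r)^{-1}\,dr$ is handled by the scaling $r=su$: it becomes $s^{-\alpha}\int_0^{\mu/s} u^{-\alpha}(1+u)^{-1}\,du$, bounded by the convergent integral $\int_0^\infty u^{-\alpha}(1+u)^{-1}\,du$, which is finite exactly because $0<\alpha<1$ (the condition $\alpha<1$ controls behavior at $0$, $\alpha>0$ at $\infty$). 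The main technical delicacy is the coordinate setup under only the $C^2$ hypothesis on $\rho$: one must verify that every Taylor remainder is of size $O(|w-z|^2)$ (modulo a harmless $|\rho(z)|$ contribution) so as to be absorbed by the native summands of the lower bound for $|g|$. Once the coordinates are in place, the three-step integration is routine.
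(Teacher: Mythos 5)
Your argument follows essentially the same route as the paper. Both proofs start from the lower bound in Proposition~\ref{P:g-g-eps}(a), split off the complementary region where $|g|\gtrsim 1$ (reducing there to the uniform bound $\int_D|\rho(w)|^{-\alpha}\,dV(w)<\infty$), set up boundary-adapted real coordinates near $\pi(z)$ built from $-\rho(w)$, the imaginary part of the complex-normal pairing, and the complex tangent directions, and then estimate the resulting model integral
\begin{equation*}
\int \frac{r^{-\alpha}\,dy\,dt\,dr}{(s+r+|t|+|y|^2)^{n+1}}\ \lesssim\ s^{-\alpha}.
\end{equation*}
The only difference is in how the model integral is finished: you integrate $y$, then $t$, then $r$ by three successive elementary Fubini steps, whereas the paper first rescales to normalize $s=|\rho(z)|$ to $1$ and then integrates $r$ first using $\int_0^\infty r^{-\alpha}(r+X)^{-n-1}\,dr = c_\alpha X^{-n-\alpha}$; both are routine and equivalent. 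Two small presentational points worth tightening: the normal projection $\pi(z)$ is only defined for $z$ sufficiently near $bD$, so you should (as the paper does) also dispose of the regime $|\rho(z)|\geq c_1$ separately, where the estimate is trivial since $|g(w,z)|\gtrsim|\rho(z)|\gtrsim 1$; and the finite partition of unity is superfluous once $\zeta_0=\pi(z)$ is fixed, since a single chart around $\pi(z)$ covers the region $|w-z|<\mu$.
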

Once \eqref{E:int-g-est-z} is proved, the symmetry 
$| g(w, z) | \approx | g (z, w) |$, 
see \eqref{E:g-g-eps-symm}, 
shows that the analogous inequality
\begin{equation}\label{E:int-g-est-w}
   \int\limits_D | g(w, z) |^{-n-1} | \rho (z) |^{- \alpha} dV(z) \leq c^\prime_\alpha | \rho (w) |^{- \alpha}
\end{equation} 
also holds.
The fact that \eqref{E:int-g-est-z} and \eqref{E:int-g-est-w} imply Proposition \ref{P:comparison-op} is a consequence of standard arguments, 
see e.g.  \cite{MS}. Indeed, for fixed $z\in D$ we write 
$$| g (w, z) |^{-n -1} | f(w) | \ =\ F_1(w) \cdot F_2(w)$$ with 
$$F_1 = | g (w, z) |^{-(n+1)/p} | f(w) | | \rho (w) |^{\beta /q}\,;\ \ \  
F_2 = | g (w, z) |^{-(n+1)/q} | \rho (w) |^{-\beta/q},
$$
 where $q$ is the dual exponent to $p, \ 1/p + 1/q = 1$ ($\beta$ will be chosen later). 
 Then by H\"{o}lder's inequality 
\begin{equation*}
   | \varGamma (f)(z) |^p \leq \left( \int | F_1 | \cdot | F_2 | \right)^p \leq \ \int | F_1 |^p \cdot \left( \int |F_2 |^q \right) ^{p/q}
\end{equation*}
Now
\begin{equation*}
  \int | F_2 (w) |^q\,dV(w) = \int | g(w, z) |^{-n+1} | \rho (w) |^{-\beta} dV (w)\ \leq \ c_\beta\, | \rho (z) |^{- \beta}
\end{equation*}
by \eqref{E:int-g-est-z}, if we choose $\beta\in (0, 1)$. Thus
\begin{equation*}
   | \varGamma (f) (z) |^p \leq c \left( \int | g(w, z) |^{-n - 1} | f(w) |^p | \rho (w) |^{\beta p/q} d V (w) \right) |\rho (z) |^{- \beta p/q}.
\end{equation*}
If we integrate this in $z$, interchange the order of integration and use
 \eqref{E:int-g-est-w} with $\alpha = \beta/q$ , (having chosen $\beta$ 
 sufficiently small so that $0 < \beta p/q < 1$ in addition to $0 < \beta < 1)$, then the result is 
\begin{equation*}
   \int_D\limits | \varGamma (f)(z) |^p dV(z) \leq C_p\int\limits_D | f(w) |^p dV (w) \ , 
\end{equation*}
proving the Proposition.
\medskip

We turn to the proof of Lemma \ref{L:comparison-op}. We consider first the heart of the matter, where $z \in D$ is sufficiently close to the boundary, and the region of integration in \eqref{E:int-g-est-z} is limited to those $w$ sufficiently close to $z$. That is, for a small positive constant $c_1$, we show first that 
\begin{equation}\label{E:local-int-g-est-z}
   \int_{| w - z | < c_1}\limits\!\!\!\!\!\!
    | g(w, z)|^{-n-1} |\rho (w) |^{- \alpha}dV(w) \leq c |\rho (z) |^{- \alpha},\  \
\text{for}\ \
| \rho (z) | < c_1.
\end{equation}
Now if $z$ is sufficiently close to $bD$, then the normal projection $\pi (z)$ of $z$ to the boundary is well defined. Hence by a translation and unitary linear transformation of $\mathbb{C}^n$ we can find a new coordinate system $(z_1 , z_2 , \ldots , z_n )$ centered at $\pi (z)$, that is $\pi (z) = (0, \ldots , 0)$, so that if $z_n = x_n + i y_n$, the points $(z_1, \cdots , z_{n-1}, x_n )$ are in the (real) tangent plane at $\pi(z) = (0, \ldots , 0)$; moreover our initial $z$ is given as $(0, \ldots , 0, iy_n )$ in this coordinate system. It follows that 
$$y_n \approx | \rho (z) |;\quad | z - \pi (z) | \approx |\rho (z) |.$$
 In addition 
 $$\displaystyle \frac{\partial\rho}{\partial z_j}  (\pi (z)) = 0, \ 1 \leq j \leq n-1, 
 \ \ \text{while}\ \ 
 \frac{\partial \rho}{\partial y_n} (\pi (z)) \neq 0,
$$
and
$$
\left( \frac{\partial \rho}{\partial z_n} \right)\!( \pi (z))  =  - \frac{i}{2} \left( \frac{\partial \rho}{\partial y_n} \right)\!(\pi (z)).
$$
Thus if $w = (w_1, \cdots , w_n )$, with $w_n = u_n + iv_n$, then 
$$| \Im \langle \partial \rho (\pi (z)), \ w - z \rangle |\ \approx\ | u_n- x_n |\ =\ |u_n |.
$$

Now by Proposition \ref{P:g-g-eps} and the symmetry \eqref{E:g-g-eps-symm}
we have that 
$$| g(w, z) | \approx  | \rho (w) | + | \rho (z) | + 
|\Im \langle \partial \rho (z), w-z \rangle | + | w-z  |^2,$$
and by the above, 
\begin{equation*}
\Im \langle \partial \rho (z), w-z \rangle = 
\Im \langle \partial \rho (\pi (z)), w-z \rangle + \bigO (| \rho (z)|| w - z |)
\end{equation*}
 Combining this with the above and the fact that $|w-z|<c_1$ with $c_1$ small, we obtain
 \begin{equation*}
    | \rho (z)| + | \Im \langle \partial \rho (z), w-z  \rangle | \ 
    \gtrsim\ 
 | \rho (z) | + |u_n |,
 \end{equation*}
and so
\begin{equation}\label{E:abs-g-lower-bound}
| g(w,z) | \gtrsim | \rho (z) | + | \rho (w) | + | u_n |  + | w^\prime |^2 \ ,
\end{equation} 
  where $z^\prime = (z_1 , \ldots, z_{n-1}) = (0, \ldots , 0), \text{ and } \ w^\prime = (w_1, \ldots , w_{n-1})$. 

We now introduce a further coordinate system about $\pi (z) = (0, \ldots , 0)$ via a $C^2$ change of variables so that these new coordinates $(\zeta_1, \zeta_2, \ldots , \zeta_n)$ are related to the previous coordinates by 
$$\zeta_j = z_j , \ \ 1 \leq j \leq n-1;\quad \zeta_n = x_n - i\rho (z).$$
 Thus because of \eqref{E:abs-g-lower-bound}, if we write $-\rho (z) = t$ and $- \rho (w) = s$, then to prove 
 \eqref{E:local-int-g-est-z}
  it suffices to see that 
\begin{equation*}
   \int\limits_{s \in \mathbb R^+}\int\limits_{u_n \in \mathbb R}\,
   \int\limits_{w^\prime \in\mathbb{C}^{n-1}}
    \limits\!\! \frac{s^{- \alpha}}{\left( t + s + | u_n | + | w^\prime |^2 \right)^{n+1}} \, ds\, du_n\, dV (w^\prime ) \ \leq c\, t^{- \alpha}
\end{equation*}
 for any $t > 0$, and $0 < \alpha < 1$.
To prove this inequality, note that by rescaling by the mapping $$w^\prime \to \delta^2 w^\prime; \quad   u_n \to \delta u_n; \quad s \to \delta s,$$
 we are reduced to checking that 
\begin{equation}\label{E:reduced-suff-local-int-g-est-z}
   \int_{\mathbb{R}^+ \times \mathbb{R} \times \mathbb{C}^{n-1}}
   \limits \frac{s^{- \alpha}}{\left( 1 + s + | u_n | + | w^\prime |^2 \right)^{n+1}} \ ds\, du_n\, dV (w^\prime ) \ < \ \infty .
\end{equation} 
Now recall that for $0<\alpha<1$ we have
$$
\displaystyle \int^\infty_0\limits \frac{s^{- \alpha}}{(s + X)^{n+1}}\,ds = c_\alpha X^{-n-\alpha}, \quad \text{whenever}\quad   X > 0
$$ 
(the restriction on $\alpha$ guarantees the convergence of the integral).
 Applying this fact to $X = 1 + | u_n | + | w^\prime |^2$ we see that the integral 
\eqref{E:reduced-suff-local-int-g-est-z} is further reduced to a multiple of 

$$\displaystyle \int_{\mathbb{R} \times \mathbb{C}^{n-1}}\limits
\!\! \frac{1}{\left( 1 + | u_n | + |w^\prime |^2 \right)^{n+ \alpha}} \ du_n\, dV(w^\prime ),
$$ 
and the  finiteness of the latter is easily seen for $\alpha > 0$. 
This proves \eqref{E:local-int-g-est-z}.\\

What remains is the situation where either $| \rho (z) | \geq c_1$ or $| z - w | \geq c_1$ (or both). In any case we note from 
Proposition \ref{P:g-g-eps}
that $| g (w, z) | \geq c^\prime > 0$ and hence it suffices to see that
\begin{equation}\label{E:far-out-int-est-g}
   \int_D\limits | \rho (w) |^{-\alpha} dV(w) < \infty \ , \ \text{ whenever } \ \ \alpha < 1 \ .
\end{equation} 
\indent To verify \eqref{E:far-out-int-est-g} cover the boundary of $D$ by finitely many small balls, so that in each ball we can introduce a coordinate system as above. Then the finiteness of the part of the integral 
\eqref{E:far-out-int-est-g} taken over each such ball is easily reduced the fact that $$\displaystyle \int^1_0\limits s^{- \alpha} ds < \infty,\quad \text{when}\quad \alpha < 1.
$$

\noindent This concludes the proof of Lemma \ref{L:comparison-op} and hence also of Proposition \ref{P:comparison-op}.\\

We now apply this proposition to obtain a preliminary estimate for the operators $\{B^1_\epsilon\}_{\epsilon>0}$. We rewrite $B^1_\epsilon(w, z)$,
the kernel of $B^1_\epsilon$,
 as
\begin{equation}\label{E:repr-B-1-eps-ker}
  B^1_\epsilon (w, z) = \frac{N_\epsilon (w, z)}{g_\epsilon(w, z)^{n+1}}
\end{equation}
 where $N_\epsilon (w, z)$ can be computed from \eqref{E:def-ker-B-1-eps} and 
\eqref{E:def-G-eps}, which give
\begin{equation}\label{E:def-N-eps}
 N_\epsilon(w, z) 
= - 
\left( 
\left( \overline{\partial}_w \eta_\epsilon \right)^{n-1}\!\wedge
\overline{\partial}_w g_\epsilon\wedge
\eta_\epsilon \ +\ 
g_\epsilon\,(\overline\partial_w\eta_\epsilon)^n
\right).
\end{equation}
Here and in the sequel, we use the notation $\bigO_\epsilon$ to indicate 
a form (or a function) whose coefficients are bounded by $c_\epsilon|w-z|$. 
 Now looking back at the definition of $g_\epsilon$ and $\eta_\epsilon$, 
see \eqref{E:def-g-eps} and \eqref{E:def-eta-eps}, 
 we see that 
 \begin{align*}
   &\overline{\partial}_w g_\epsilon (w, z) = - \overline{\partial} \rho (w) + 
   \bigO_\epsilon ( | w- z  |)\\
&\eta_\epsilon (w, z) = \partial \rho (w) + \bigO_\epsilon ( | w-z  | )\\
&\overline{\partial}_w \eta_\epsilon (w, z) = \overline{\partial} \partial \rho (w) + \bigO_\epsilon ( | w-z  | )\, .
\end{align*}
Inserting these expressions in \eqref{E:def-N-eps} yields
\begin{equation}\label{E:repr-N-eps}
N_\epsilon(w, z) =
\left(\overline{\partial} \rho\wedge \partial \rho \wedge \left( \overline{\partial} \partial \rho \right)^{n-1}\right) (w)\, +\, 
\bigO_\epsilon (|w-z|).
\end{equation}
 Since these forms have degree $2n$ in $w$, and the Euclidean volume element in $\mathbb R^{2n}$ may be expressed as
 \begin{equation*}
 dV(w) = \prod\limits_{j=1}^n\frac{i}{2}\,dw_j\,\wedge\,d\overline{w}_j\, ,
 \end{equation*}
 we have
 \begin{equation}\label{E:aux-0}
 \left(\overline{\partial} \rho\wedge \partial \rho \wedge \left( \overline{\partial} \partial \rho \right)^{n-1}\right) \!(w)=\ \mathcal K_0(w)\,dV(w) ,
\end{equation}
where $\mathcal K_0 (w)$ is a continuous function on $\overline D$. We also have
\begin{equation}\label{E:aux-1}
|O_\epsilon (|w-z|)|\ \leq c_\epsilon |w-z|,
\end{equation}
however,
 the bound $c_\epsilon$ may not remain bounded as $\epsilon \to 0$ because
  it
 depends on the first derivatives of $\tau^\epsilon$.
Nevertheless we do have
\begin{corollary}\label{C:B-eps-bdd}
   For each $\epsilon > 0$ and $ 1 < p < \infty$, the operator $B_\epsilon$ is bounded on $L^p (D)$.
\end{corollary}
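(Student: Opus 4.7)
The plan is to decompose $B_\epsilon = B_\epsilon^1 + B_\epsilon^2$ and estimate each summand separately, reducing the bulk of the work to Proposition \ref{P:comparison-op}. For the correction term $B_\epsilon^2$, Proposition \ref{P:correction} already asserts that its kernel is continuous on $\overline D\times\overline D$, hence bounded by some $\epsilon$-dependent constant $M_\epsilon$. From there the $L^p(D)$-boundedness of $B_\epsilon^2$ is immediate for every $p\in[1,\infty]$, since $D$ has finite Lebesgue measure.

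For the main term $B_\epsilon^1$, I would exploit the representation \eqref{E:repr-B-1-eps-ker} together with \eqref{E:repr-N-eps}, \eqref{E:aux-0} and \eqref{E:aux-1}. Writing
\begin{equation*}
N_\epsilon(w,z) \,=\, \mathcal K_0(w)\,dV(w)\,+\,\bigO_\epsilon(|w-z|),
\end{equation*}
where $\mathcal K_0$ is continuous on $\overline D$ and the $\bigO_\epsilon$-term has coefficients bounded by $c_\epsilon |w-z|$, I can factor the kernel as $B_\epsilon^1(w,z) = \widetilde{\mathcal K}_\epsilon(w,z)\,g_\epsilon(w,z)^{-(n+1)}\,dV(w)$ with
\begin{equation*}
|\widetilde{\mathcal K}_\epsilon(w,z)|\,\leq\,\|\mathcal K_0\|_{L^\infty(\overline D)}\,+\,c_\epsilon\,\mathrm{diam}(D)\,=:\,C_\epsilon,
\end{equation*}
uniformly on the compact set $\overline D\times \overline D$. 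This yields the pointwise domination
\begin{equation*}
|B_\epsilon^1(f)(z)|\,\leq\,C_\epsilon\int_D |g_\epsilon(w,z)|^{-(n+1)}\,|f(w)|\,dV(w)\,=\,C_\epsilon\,\varGamma_\epsilon(|f|)(z),
\end{equation*}
and the desired $L^p$-bound for $B_\epsilon^1$ then follows at once from the $L^p$-boundedness of $\varGamma_\epsilon$ supplied by Proposition \ref{P:comparison-op}.

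I do not anticipate any real obstacle here: the substantive analytic input is already packaged in Proposition \ref{P:comparison-op}, and the corollary reduces to a domination-of-kernels argument on a bounded domain. The only subtlety worth flagging is that the constant $C_\epsilon$ produced in this way depends on the first derivatives of $\tau^\epsilon$ (through $c_\epsilon$) and on $M_\epsilon$, and so is not expected to remain bounded as $\epsilon\to 0$. This is precisely why the corollary falls short of the main Theorem, and why the more refined decomposition announced in Lemma \ref{L:B-B-star-dec} will be required later in order to extract $\epsilon$-uniform estimates and transfer them to the Bergman projection $B$ itself.
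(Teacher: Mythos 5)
Your proposal is correct and follows essentially the same route as the paper: decompose $B_\epsilon=B^1_\epsilon+B^2_\epsilon$, observe that $B^2_\epsilon$ has a bounded kernel on $\overline D\times\overline D$, dominate the kernel of $B^1_\epsilon$ by a constant multiple of $|g_\epsilon(w,z)|^{-(n+1)}$ via \eqref{E:repr-B-1-eps-ker}--\eqref{E:aux-1}, and invoke Proposition \ref{P:comparison-op} for $\varGamma_\epsilon$. Your closing remark about the $\epsilon$-dependence of the constants also matches the paper's own caveat.
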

By Proposition \ref{P:correction}, $B_\epsilon = B^1_\epsilon + B^2_\epsilon$, and the operator $B^2_\epsilon$ has a kernel which is bounded on $\overline{D} \times \overline{D}$, hence it gives rise to bounded operator on $L^p$, for all $1 \leq p \leq \infty$. For the operator $B^1_\epsilon$ we apply 
\eqref{E:repr-B-1-eps-ker} through \eqref{E:aux-1}
 to see that 
 $$\left| B^1_\epsilon (f) (z) \right| \leq (c_0+c_\epsilon) \varGamma_\epsilon ( | f | )(z).
 $$
 Now Proposition \ref{P:comparison-op} grants the boundedness of 
 $B^1_\epsilon$ on $L^p$ for $1 < p < \infty$, proving the Corollary.

\section{The approximate \textquotedblleft symmetry\textquotedblright \ of $B_\epsilon$}
Our goal here is to understand the degree to which the operator $B_\epsilon$ is symmetric. Indeed, if it had exactly that property, that is, $B^\ast_\epsilon = B_\epsilon$, then in view of properties 
(a) and (b) in Proposition \ref{P:correction}, $B_\epsilon$ would necessarily be identical with the Bergman projection. While this is not the case, the facts concerning the approximate symmetry of $B_\epsilon$ are centered in the following key lemma. If 
$$(f, g) = \int_D\limits f(w) \overline{g}(w)\, dV(w)$$
 is the inner product on $L^2(D)$, and $T$ is a bounded operator on 
 $L^2(D)$, we denote by $T^\ast$ its adjoint with respect to this inner product; it satisfies 
 $$\left( T^\ast f, g \right) = \left( f, Tg \right).$$
\begin{lemma}\label{L:B-B-star-dec}
   For each $\epsilon > 0$, we can decompose $B_\epsilon - B_\epsilon^\ast$ as 
\begin{equation}\label{E:B-B-star-dec}
   B_\epsilon - B^\ast_\epsilon = A_\epsilon + C_\epsilon
\end{equation} 
where
\begin{itemize}
\item[(a)] $\left\| A_\epsilon \right\|_{L^p \to L^p} \leq \epsilon\, c_p$ , \quad for  \ \ $1 < p < \infty$\\

\item[(b)]
Each $C_\epsilon$ has a kernel which is continuous on $\overline{D} \times \overline{D}$, and hence $C_\epsilon$ maps $L^1(D)$
 to $C(\overline D)$.
 \end{itemize}
\end{lemma}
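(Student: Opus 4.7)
The plan is to split $B_\epsilon = B_\epsilon^1 + B_\epsilon^2$ and handle the two pieces separately. By Proposition \ref{P:correction}, $B_\epsilon^2(w,z)$ is continuous on $\overline D\times\overline D$, so $B_\epsilon^2 - (B_\epsilon^2)^\ast$ already has a continuous kernel and is absorbed into $C_\epsilon$. The real work concerns $B_\epsilon^1 - (B_\epsilon^1)^\ast$, whose scalar kernel (with respect to $dV(w)$) is $K_\epsilon(w,z) - \overline{K_\epsilon(z,w)}$, where by \eqref{E:repr-B-1-eps-ker}, \eqref{E:repr-N-eps} and \eqref{E:aux-0},
\begin{equation*}
K_\epsilon(w,z) = \frac{\mathcal K_0(w) + R_\epsilon(w,z)}{g_\epsilon(w,z)^{n+1}},
\end{equation*}
with $\mathcal K_0$ real-valued and continuous on $\overline D$ and $R_\epsilon$ a remainder of size $|R_\epsilon(w,z)| \leq c_\epsilon |w-z|$.

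\medskip

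I would then choose $\delta_\epsilon>0$ small enough that the following hold on $\{|w-z|\leq\delta_\epsilon\}$: (i) Proposition \ref{P:g-eps-g-eps-conj} applies, giving $|g_\epsilon(w,z) - \overline{g_\epsilon(z,w)}|\leq c\epsilon|w-z|^2$; (ii) the continuity of $\mathcal K_0$ and $c_\epsilon \delta_\epsilon\leq\epsilon$ together yield $|\mathcal K_0(w) - \mathcal K_0(z)|\leq\epsilon$ and $|R_\epsilon(w,z)|,|R_\epsilon(z,w)|\leq\epsilon$. With a smooth cutoff $\psi_\epsilon = \psi_\epsilon(|w-z|^2)$ equal to $1$ for $|w-z|\leq\delta_\epsilon/2$ and vanishing for $|w-z|\geq\delta_\epsilon$, set
\begin{equation*}
A_\epsilon = \psi_\epsilon \bigl[K_\epsilon(w,z) - \overline{K_\epsilon(z,w)}\bigr],\qquad C_\epsilon = (1-\psi_\epsilon)\bigl[K_\epsilon(w,z) - \overline{K_\epsilon(z,w)}\bigr] + \bigl[B_\epsilon^2 - (B_\epsilon^2)^\ast\bigr].
\end{equation*}

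\medskip

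On the support of $1-\psi_\epsilon$ we have $|w-z|\geq\delta_\epsilon/2$, so by \eqref{E:g-eps-bounds} $|g_\epsilon|$ is bounded below by a positive constant (depending on $\epsilon$); the kernel $K_\epsilon$ is then bounded and jointly continuous on $\overline D\times\overline D$, and multiplication by $(1-\psi_\epsilon)$ annihilates the diagonal, so $C_\epsilon$ has a continuous kernel and maps $L^1(D)$ into $C(\overline D)$. For $A_\epsilon$ I combine over a common denominator,
\begin{equation*}
K_\epsilon(w,z) - \overline{K_\epsilon(z,w)} = \frac{\mathcal N_1 \bigl[\overline{g_\epsilon(z,w)}^{n+1} - g_\epsilon(w,z)^{n+1}\bigr] + (\mathcal N_1 - \overline{\mathcal N_2})\,g_\epsilon(w,z)^{n+1}}{g_\epsilon(w,z)^{n+1}\,\overline{g_\epsilon(z,w)}^{n+1}},
\end{equation*}
with $\mathcal N_1 = \mathcal K_0(w) + R_\epsilon(w,z)$ and $\mathcal N_2 = \mathcal K_0(z) + R_\epsilon(z,w)$. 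Property (i) together with the lower bound $|w-z|^2\lesssim|g_\epsilon|$ from Proposition \ref{P:g-g-eps} gives $|\overline{g_\epsilon(z,w)}^{n+1} - g_\epsilon(w,z)^{n+1}|\leq c\epsilon|g_\epsilon|^{n+1}$, while property (ii) gives $|\mathcal N_1 - \overline{\mathcal N_2}|\leq c\epsilon$ on the support of $\psi_\epsilon$ (where $|\mathcal N_1|$ is also bounded). Combining,
\begin{equation*}
|A_\epsilon(w,z)|\leq \frac{c\epsilon}{|g_\epsilon(w,z)|^{n+1}},
\end{equation*}
and Proposition \ref{P:comparison-op} yields $\|A_\epsilon\|_{L^p\to L^p}\leq\epsilon c_p$ for $1<p<\infty$.

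\medskip

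The main obstacle is the estimate $|\mathcal N_1 - \overline{\mathcal N_2}|\leq c\epsilon$. The leading discrepancy $\mathcal K_0(w) - \mathcal K_0(z)$ is controlled only by the modulus of continuity $\omega$ inherited from the second derivatives of $\rho$; the remainders $R_\epsilon$ carry coefficients that depend on first derivatives of $\tau^\epsilon$ with a constant $c_\epsilon$ that may blow up as $\epsilon\to 0$, and they can be tamed only by the crude bound $c_\epsilon\delta_\epsilon\leq\epsilon$. The whole scheme therefore hinges on choosing $\delta_\epsilon$ (depending on $\epsilon$) small enough to simultaneously absorb $\omega$ and $c_\epsilon$; only then does the denominator cancellation furnished by Proposition \ref{P:g-eps-g-eps-conj} translate into the claimed $\epsilon$-smallness at the level of operator norms.
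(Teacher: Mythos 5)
Your argument is correct and follows essentially the same route as the paper: decompose $B_\epsilon-B_\epsilon^\ast$ into a near-diagonal part $A_\epsilon$, estimated via Proposition \ref{P:g-eps-g-eps-conj} for the denominator, real-valuedness and uniform continuity of $\mathcal K_0$ for the leading numerator term, and the crude choice $c_\epsilon\delta_\epsilon\le\epsilon$ for the remainder, then bound $A_\epsilon$ on $L^p$ through Proposition \ref{P:comparison-op}, with a far-diagonal part $C_\epsilon$ whose kernel is manifestly continuous because $|g_\epsilon|$ is bounded below off the diagonal. The one cosmetic deviation is your cutoff: you truncate in $|w-z|$ alone, whereas the paper truncates in $|\rho(z)|+|\rho(w)|+|z-w|$; both localize to a neighborhood of the singular set $\{w=z\in bD\}$ and give $|g_\epsilon|\gtrsim\delta_\epsilon^2$ on the complement, so the difference is immaterial.
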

In general, the norm of the operator $C_\epsilon$ may increase as $\epsilon \to 0$.
\begin{proof}
   To construct the operators $A_\epsilon$ and $C_\epsilon$ we need to break up $B_\epsilon$ into a part whose kernel is supported near the \textquotedblleft diagonal\textquotedblright \ of $D \times D$, and a complementary part supported away from this set. For this purpose, fix a continuous function $\varphi$ on $\mathbb R^+$, so that $\varphi (t) = 1$, when $0 \leq t \leq \frac{1}{2}$ and $\varphi (t) = 1$ for $t > 1$, and for each $r > 0$ set 
   $$\varphi_r (w, z) = \varphi \left( \frac{| \rho (z) | + | \rho (w) | + | z - w |}{r} \right) $$
We write
\begin{equation*}
   D^r (f)(z) = \frac{1}{(2 \pi i)^n}\int_D\limits 
    \varphi_r (w, z)f(w)B^1_\epsilon (w, z)
    \end{equation*}
    \begin{equation*}
E^r (f)(z) = \frac{1}{(2 \pi i)^n} \int_D\limits(1 - \varphi_r (w, z)) f(w) 
B^1_\epsilon (w, z)
\end{equation*}
so that $B^1_\epsilon = D^r + E^r$. The parameter $r$ will be chosen in terms of $\epsilon$ momentarily.
We then define $A_\epsilon \text{ and } C_\epsilon$ by
$$A_\epsilon = D^r - (D^r )^\ast$$
 and
$$C_\epsilon = E^r - \left( E^r \right)^\ast + B^2_\epsilon - \left( B^2_\epsilon \right)^\ast$$
With these definitions, the identity \eqref{E:B-B-star-dec} clearly holds, since $B_\epsilon = B^1_\epsilon + B^2_\epsilon$.
To proceed further we recall that if $T$ is a bounded operator on $L^2 (D)$ given with a kernel $K$, 
\begin{equation*}
   T(f)(z) = \int_D\limits K(w, z) f(w) dV(w) \ ,
\end{equation*} 
with say $\displaystyle | K (w, z) | \lesssim  | g(w, z) |^{-n-1}$,  then its adjoint $T^\ast$ is given by the kernel $\overline{K} (z, w)$ in place of $K (w, z)$.
In view of the representation \eqref{E:repr-B-1-eps-ker},  the kernel of $D^r$ is
\begin{equation*}
  \frac{(\mathcal K_0(w) + O_\epsilon (|w-z|))\varphi_r(w, z)}{
  (2\pi i)^n\left( g_\epsilon (w, z) \right)^{n+1}},
\end{equation*}
and so to study $D^r - (D^r)^\ast$ we must estimate
\begin{equation*}
   \frac{1}{\left( g_\epsilon (w, z) \right)^{n+1}} \ -\  \frac{1}{\left( \overline{g}_\epsilon z, w) \right)^{n+1}}
   \end{equation*}
   and
   \begin{equation}\label{E:numerator}
   \frac{\mathcal K_0(w)}{(2\pi i)^n} -
   \overline{\left(\frac{\mathcal K_0(z)}{(2\pi i)^n}\right)} + 
    \frac{O_\epsilon (|w-z|)}{(2\pi i)^n} -\overline{\left(\frac{O_\epsilon(|w-z|)}
    {(2\pi i)^n}\right)}
   \end{equation}
     on the support of  $\varphi_r$.
(Note that $\varphi_r (w, z)$ is obviously symmetric in $w$ and $z$). Now
 $$\displaystyle \left( g_\epsilon (w, z) \right)^{-n-1} - \left( \overline{g}_\epsilon (z, w) \right)^{-n-1} = 
   \frac{\left( \overline{g}_\epsilon (z,w) \right)^{n+1} - \left( g_\epsilon (w, z) \right)^{n+1}}{\left( g_\epsilon (w, z) \overline{g}_\epsilon (z, w) \right)^{n+1}}
   $$
   Taking into account the fact that $| g_\epsilon (w, z) | \approx | g_\epsilon (z, w) |$, we see that the above is majorized by a multiple of 
\begin{equation*}
   \frac{\left| g_\epsilon (w, z) - \overline{g}_\epsilon (z, w) \right|}{| g_\epsilon (w, z) |^{n+2}} \ . 
\end{equation*}
In turn, by Propositions \ref{P:g-g-eps} and \ref{P:g-eps-g-eps-conj}, this is majorized by a multiple of 
$$ \frac{\epsilon | z - w |^2}{| g_\epsilon (w, z) |^{n+2}} \lesssim \frac{\epsilon}{| g_\epsilon (w, z) |^{n+1}},$$
whenever $| z - w | < \delta_\epsilon$. 
So if we choose $r$ so that $r \leq \delta_\epsilon$, then 
\begin{equation}\label{E:denominator-est}
   \left| \frac{1}{\left( g_\epsilon (w, z) \right)^{n+1}} - \frac{1}{\overline{g}_\epsilon ( z, w)^{n+1}} \right| \,\leq\, \frac{c\,\epsilon}{| g_\epsilon (w, z) |^{n+1}} 
\end{equation} 
for $(w, z)$ in the support of $\varphi_r$, because then $| z - w | < r$.

\noindent Next, we examine the numerator \eqref{E:numerator}. 
To this end, we first note that 
$$
O_\epsilon (|w-z|) \ =\  \bigO_\epsilon (r)
$$
for $(w, z)$ in the support of $\varphi_r$, because then $| z - w | < r$.
To estimate the difference $\mathcal K_0(w)/(2\pi i)^n -
\overline{\mathcal K_0(z)/(2\pi i)^n}$
we invoke the identities \eqref{E:repr-N-eps} through
\eqref{E:repr-N-eps} along with the computation
\cite[exercise VII.E.7.2]{R}, and obtain
\begin{equation*}
 \frac{\mathcal K_0(w)}{(2\pi i)^n}
 = \frac{n !}{\pi^n}
  | \nabla \rho(w) |^2 \det \mathcal{L}_w 
\end{equation*} 
where $\det \mathcal{L}_{w}$ is the determinant of the Levi form at the point $w$ (for the domain 
$\{ \zeta: \rho (\zeta) < \rho (w) \}$). This shows that
$\mathcal K_0(w)/(2\pi i)^n$ is a \textit{real}-valued continuous function on $\overline{D}$. By its uniform continuity, there is a 
$\delta^\prime = \delta^\prime_\epsilon$, so that 
\begin{equation*}
   \left|\frac{\mathcal K_0(w)}{(2\pi i)^n} -\frac{\mathcal K_0(z)}{(2\pi i)^n}\right| < \epsilon \quad \text{ if } \quad | z - w | <  \delta^\prime_\epsilon .
\end{equation*}

\indent Finally suppose that in the above inequalities we write 
$$\bigO_\epsilon (r) \leq r A_\epsilon$$ 
for an appropriate bound $A_\epsilon$ (depending on $\epsilon$). We can then choose $r$ in terms of $\epsilon$ by taking $\displaystyle r = \min \left( \delta_\epsilon , \ \delta_\epsilon^\prime , \  \epsilon / A_\epsilon \right)$. This guarantees that 
\begin{equation}\label{E:numerator-est}
   \left|\ \frac{\mathcal K_0(w) + O_\epsilon (|w-z|)} 
   {(2\pi i)^n}
   \  -\  
  \overline{\left( \frac{{\mathcal K_0}(z) +
   {O_\epsilon}(|w-z|)}
   {(2\pi i)^n}
  \right)}
   \ \right| 
    \leq c \,\epsilon 
   \end{equation}
  on the support of $\varphi_r$.
Combining \eqref{E:numerator-est} with \eqref{E:denominator-est} then shows that
\begin{equation}
   \left|\ \frac{\mathcal K_0(w) + O_\epsilon (|w-z|)} {(2\pi i)^n\left( g_\epsilon (w, z) \right)^{n+1}}\  -\  
  \overline{\left( \frac{{\mathcal K_0}(z) +
   {O_\epsilon}(|w-z|)}
   {(2\pi i)^n\left({g}_\epsilon (z, w) \right)^{n+1}}\right)}\ \right| \leq \frac{c \,\epsilon}{  \left| g_\epsilon (w, z) \right|^{n+1} } .
\end{equation} 
So by Proposition \ref{P:comparison-op} we conclude that 
\begin{equation*}
   \left\| A_\epsilon \right\|_{L^p \to L^p} = \left\| D^r - (D^r)^\ast \right\|_{L^p \to L^p} \leq \epsilon  c_p ,
\end{equation*}
which is conclusion (a) of this  lemma.\\

The second conclusion is immediate. Indeed since  $| \rho (z) | + | \rho (w) | + | z - w | \geq r/2$ on the support of $1 - \varphi_r$, we have by 
Proposition \ref{P:g-g-eps}
 that $\left| g_\epsilon (w, z) \right| \gtrsim r^2$ there, and hence
 the kernel of $B^1_\epsilon$  is a continuous function of $(w, z)$  there, see
 \eqref{E:repr-B-1-eps-ker}. 

Thus the kernels of $E^r$ and $(E^r)^\ast$ are continuous on $\overline{D} \times \overline{D}$, and by Proposition \ref{P:correction}, the same is true for $B^2_\epsilon$ and $\left( B^2_\epsilon \right)^\ast$. Since $C_\epsilon = E^r - (E^r)^\ast + B^2_\epsilon - \left( B^2_\epsilon \right)^*$, the operator $C_\epsilon$ has a continuous kernel on $\overline{D} \times \overline{D}$. From this it is also evident that $C_\epsilon$ maps $L^1(D)$ to $C(\overline D)$, proving the lemma.
\end{proof}

\section{The main theorems}
Let $B$ be the Bergman projection for the domain $D$. The operator $B$ is the orthogonal projection of $L^2(D)$ to the Bergman space 
$\vartheta L^2 (D)$, the subspace of holomorphic functions in $L^2 (D)$. Then as is well-known, 
\begin{equation}\label{E:def-Bergman-pr}
   B(f)(z) = \int_D\limits B(w, z) f(w) dV(w),\quad f\in L^2(D),\quad z\in D
\end{equation} 
where $B(w, z)$ is the Bergman kernel, which satisfies $B(z, w) = \overline{B}(w, z)$.
\begin{theorem}\label{T:Bergman}
   Suppose the domain is of class $C^2$ and strongly pseudoconvex. Then $f \mapsto B(f)$ extends to a bounded mapping of $L^p (D)$ to itself, for each $p$, \ $1 < p  < \infty$.
\end{theorem}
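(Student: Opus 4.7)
The strategy is to exploit the operator identities that follow from $B_\epsilon$ being the identity on $\vartheta L^2(D)$ (Proposition \ref{P:correction}(b)) together with the analogous property of $B$. Since $B$ and $B_\epsilon$ each map $L^2(D)$ into $\vartheta L^2(D)$, both $B_\epsilon B = B$ and $BB_\epsilon = B_\epsilon$ hold on $L^2(D)$. Taking $L^2$-adjoints, using $B^* = B$, gives $B_\epsilon^* B = B_\epsilon^*$ and the crucial identity $BB_\epsilon^* = B$. Combining the latter with Lemma \ref{L:B-B-star-dec} (so that $B_\epsilon^* = B_\epsilon - A_\epsilon - C_\epsilon$) yields
$$B = BB_\epsilon^* = BB_\epsilon - BA_\epsilon - BC_\epsilon = B_\epsilon - BA_\epsilon - BC_\epsilon,$$
i.e., $B(I+A_\epsilon) = B_\epsilon - BC_\epsilon$ as an identity of operators on $L^2(D)$.

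Fix $p\in[2,\infty)$ and choose $\epsilon>0$ so small that $\|A_\epsilon\|_{L^p\to L^p}<1/2$; this is possible by Lemma \ref{L:B-B-star-dec}(a). Then $I+A_\epsilon$ is invertible on $L^p(D)$ by Neumann series, and rearranging the $L^2$ identity gives
$$B\bigl(I + C_\epsilon(I+A_\epsilon)^{-1}\bigr) \;=\; B_\epsilon(I+A_\epsilon)^{-1}.$$
The right-hand side is bounded on $L^p(D)$ by Corollary \ref{C:B-eps-bdd}, so if $I + C_\epsilon(I+A_\epsilon)^{-1}$ can be inverted on $L^p$, we will have exhibited $B$ as a composition of $L^p$-bounded operators. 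The remaining case $1<p<2$ will then be obtained from self-adjointness of $B$ on $L^2$ and duality.

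The main obstacle is this inversion, which I propose to handle by Fredholm theory. Because $C_\epsilon$ has a continuous kernel on $\overline D\times\overline D$ (Lemma \ref{L:B-B-star-dec}(b)), it is compact on $L^p(D)$ for every $1<p<\infty$ (approximate the kernel uniformly by finite-rank kernels, for instance polynomials). Hence $C_\epsilon(I+A_\epsilon)^{-1}$ is compact and by the Fredholm alternative invertibility reduces to injectivity. Suppose $f\in L^p$ satisfies $(I + C_\epsilon(I+A_\epsilon)^{-1})f=0$ and set $g=(I+A_\epsilon)^{-1}f\in L^p$. The equation rewrites as
$$g + (B_\epsilon - B_\epsilon^*)g = 0.$$
Since $D$ is bounded and $p\ge 2$, $g\in L^p\subset L^2(D)$. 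Applying $B$ to both sides and using $BB_\epsilon=B_\epsilon$ and $BB_\epsilon^*=B$, the $Bg$ terms cancel and we are left with $B_\epsilon g = 0$. Substituting back gives $B_\epsilon^* g = g$, and pairing with $g$ in $L^2$,
$$\|g\|_{L^2}^2 = \langle B_\epsilon^* g, g\rangle = \langle g, B_\epsilon g\rangle = 0,$$
so $g=0$, hence $f=0$. This is the step I expect to be trickiest, since it hinges on extracting the two independent pieces of information $B_\epsilon g=0$ and $B_\epsilon^* g=g$ from a single equation by judicious use of the reproducing identities.

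With injectivity established, $B = B_\epsilon(I+A_\epsilon)^{-1}\bigl(I + C_\epsilon(I+A_\epsilon)^{-1}\bigr)^{-1}$ on $L^2$, and for $h\in L^p\subset L^2$ the right-hand side is bounded on $L^p$ with $\|Bh\|_{L^p}\le c_p\|h\|_{L^p}$, which proves the theorem for $p\in[2,\infty)$. For $1<p<2$, with $p'>2$ denoting the dual exponent, one has for $f\in L^2\subset L^p$
$$\|Bf\|_{L^p} = \sup_{\|\varphi\|_{L^{p'}}\le 1}|\langle Bf,\varphi\rangle| = \sup|\langle f,B\varphi\rangle| \le c_{p'}\|f\|_{L^p},$$
using the bound just established on $L^{p'}$; density of $L^2$ in $L^p$ then extends $B$ to a bounded operator on $L^p(D)$, completing the proof.
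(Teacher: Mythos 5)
Your proposal is correct, and it follows the paper's blueprint exactly up to the pivotal identity: both derive $BB_\epsilon=B_\epsilon$, $B_\epsilon B=B$, hence $BB_\epsilon^*=B$, and combine with Lemma~\ref{L:B-B-star-dec} to get
$B(I+A_\epsilon+C_\epsilon)=B_\epsilon$, equivalently $B(I+A_\epsilon)=B_\epsilon - BC_\epsilon$, after which $I+A_\epsilon$ is inverted by Neumann series for small $\epsilon$. Where you diverge is the treatment of the $C_\epsilon$ term. The paper keeps the identity in the form $B=(B_\epsilon - BC_\epsilon)(I+A_\epsilon)^{-1}$ and simply proves $BC_\epsilon$ is bounded on $L^p$: for $p\le 2$, the continuous kernel of $C_\epsilon$ gives $C_\epsilon\colon L^p\to L^1\to L^\infty\subset L^2$, then $B\colon L^2\to L^2\hookrightarrow L^p$; for $p\ge 2$ one either dualizes or uses the mirror identity $(I-A_\epsilon)B=B_\epsilon^*+C_\epsilon^*B$ and bounds $C_\epsilon^*B$ analogously. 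You instead rewrite the identity as $B\bigl(I+C_\epsilon(I+A_\epsilon)^{-1}\bigr)=B_\epsilon(I+A_\epsilon)^{-1}$ and invert the bracketed operator via the Fredholm alternative, which converts the problem into an injectivity check. Your injectivity argument is correct and is the genuinely new ingredient: from $g+(B_\epsilon-B_\epsilon^*)g=0$ with $g\in L^2$, applying $B$ and using $BB_\epsilon=B_\epsilon$, $BB_\epsilon^*=B$ kills the $Bg$ terms and leaves $B_\epsilon g=0$; feeding this back gives $B_\epsilon^*g=g$, whence $\|g\|_{L^2}^2=(B_\epsilon^*g,g)=(g,B_\epsilon g)=0$. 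What each buys: the paper's closing step is lighter (no compactness or Fredholm theory, just mapping-into-$L^2$ observations), while yours yields a formula for $B$ with no occurrence of $B$ on the right-hand side, at the cost of invoking compactness of $C_\epsilon$ and the Fredholm alternative on $L^p$. Both are sound; the $1<p<2$ reduction by duality is the same in each.
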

As is well-known, for each $z \in D$, the function $B(w, z)$ belongs to $L^2 (D)$, see \cite{K}. Hence $B(f)(z)$ is well-defined by 
\eqref{E:def-Bergman-pr} when $f \in L^p (D)$ for $p\geq 2$. When $p \leq 2$, the operator 
\eqref{E:def-Bergman-pr} is initially defined only on $L^2 (D)$, a dense subspace of $L^p (D)$. In either case, the thrust of Theorem 
\ref{T:Bergman} is the inequality 
\begin{equation*}
   \left\| B(f) \right\|_{L^p} \leq c_p \| f \|_{L^p}
\end{equation*} 
for $f$ that belongs to $L^2 (D)$.

There is also a stronger version that involves the operator whose kernel is the absolute value of the Bergman kernel. Define $| B |$ to be the operator
\begin{equation*}
   | B | (f)(z) = \int_D\limits | B (z, w) | f(w) dV(w) ,
\end{equation*} 
(defined initially on $L^2 (D)$).
\begin{theorem}\label{T:abs-Bergman}
   Under the assumptions of Theorem \ref{T:Bergman}, we have that $f \mapsto | B | (f)$ is also bounded on $L^p (D)$,  $1 < p < \infty$.
\end{theorem}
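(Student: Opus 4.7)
The strategy is to reduce Theorem~\ref{T:abs-Bergman} to the pointwise kernel estimate $|B(w,z)|\lesssim |g(w,z)|^{-(n+1)}$ for $w,z\in D$. Granted this, the operator $|B|$ is pointwise dominated by the kernel of the comparison operator $\Gamma$ of Section~\ref{S:L-p-estimate}, so $|B|(f)\leq c\,\Gamma(|f|)$, and Proposition~\ref{P:comparison-op} delivers the $L^p$ bound.

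I would derive the pointwise estimate from the same operator identity that underlies Theorem~\ref{T:Bergman}. Since $B_\epsilon$ and $B$ both project $L^2(D)$ onto $\vartheta L^2(D)$, $B_\epsilon B=B$; taking adjoints gives $B_\epsilon^* B=B_\epsilon^*$, and subtracting via Lemma~\ref{L:B-B-star-dec} yields $(I-A_\epsilon)B=B_\epsilon^*+C_\epsilon B$. For $\epsilon$ so small that $\|A_\epsilon\|_{L^p\to L^p}<1/2$, this inverts to $B=B^{(0)}+B^{(1)}$ with $B^{(0)}:=(I-A_\epsilon)^{-1}B_\epsilon^*$ and $B^{(1)}:=(I-A_\epsilon)^{-1}C_\epsilon B$. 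For $B^{(0)}$ I would expand the Neumann series $\sum_{k\geq 0}A_\epsilon^k B_\epsilon^*$ and combine the pointwise bounds $|A_\epsilon(w,z)|\lesssim\epsilon|g(w,z)|^{-(n+1)}$ (implicit in the proof of Lemma~\ref{L:B-B-star-dec}) and $|B_\epsilon^*(w,z)|\lesssim|g(w,z)|^{-(n+1)}$ (from \eqref{E:repr-B-1-eps-ker}--\eqref{E:repr-N-eps} and the symmetry $|g(w,z)|\approx|g(z,w)|$) with a Forelli--Rudin type composition estimate
\begin{equation*}
\int_D |g(\zeta,z)|^{-(n+1)}|g(w,\zeta)|^{-(n+1)}\,dV(\zeta)\;\lesssim\;|g(w,z)|^{-(n+1)},
\end{equation*}
which can be proved by the same coordinate-and-rescaling argument as in Lemma~\ref{L:comparison-op}. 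This gives $|A_\epsilon^k B_\epsilon^*(w,z)|\lesssim (c\epsilon)^k|g(w,z)|^{-(n+1)}$, and for $\epsilon$ sufficiently small the series sums to $|B^{(0)}(w,z)|\lesssim|g(w,z)|^{-(n+1)}$.

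The main obstacle will be controlling the correction $B^{(1)}$, since it still contains $B$ on the right and the naive bound $|B^{(1)}(w,z)|\leq\|K_R\|_\infty\|B(w,\cdot)\|_{L^1}$ blows up as $w\to bD$. My plan is instead to show that $B^{(1)}$ has a bounded continuous kernel on $\overline{D}\times\overline{D}$; such a kernel is automatically $\lesssim|g(w,z)|^{-(n+1)}$ since $|g|$ is bounded above there. First, $R:=(I-A_\epsilon)^{-1}C_\epsilon=\sum_{k\geq 0}A_\epsilon^k C_\epsilon$ has bounded continuous kernel: $C_\epsilon$ does by Lemma~\ref{L:B-B-star-dec}(b), and each further composition with $A_\epsilon$ inherits this property via the uniform estimate $\int_D |g(w,\zeta)|^{-(n+1)}\,dV(\zeta)\leq C$ (the $\alpha=0$ case of the computation in Lemma~\ref{L:comparison-op}). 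From $B^{(1)}=RB$ and $B=B^{(0)}+B^{(1)}$ one derives $(I-R)B^{(1)}=RB^{(0)}$, whose right-hand side has bounded continuous kernel by the previous step together with the bound on $B^{(0)}$. Since $R$ has bounded kernel on a bounded domain it is compact on $L^p$, so $I-R$ is Fredholm, with invertibility on $L^p$ supplied by the bound for $B$ proved in Theorem~\ref{T:Bergman}; writing $(I-R)^{-1}=I+R(I-R)^{-1}$ then shows that $B^{(1)}=(I-R)^{-1}(RB^{(0)})$ inherits the bounded continuous kernel of $RB^{(0)}$, completing the proof of the pointwise estimate and hence of Theorem~\ref{T:abs-Bergman}.
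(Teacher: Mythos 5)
Your plan hinges on establishing the pointwise kernel estimate $|B(w,z)|\lesssim|g(w,z)|^{-(n+1)}$, but this is precisely what the paper says is unavailable for domains that are only $C^2$ (see the Introduction's discussion of why the methods for smoother domains do not apply), and the paper's proof of Theorem~\ref{T:abs-Bergman} is built specifically to avoid any such estimate. The actual proof introduces the notion of a \emph{positive majorant}: an operator $\widehat{T}$ is a positive majorant of $T$ if $\widehat T(f)\geq 0$ for $f\geq 0$ and $|T(f)(z)|\leq\widehat T(|f|)(z)$ a.e. One checks that sums, compositions, and Neumann-series inverses of operators with positive majorants again have positive majorants; combining this with \eqref{E:other-KS}, with the positive majorants $c\varGamma_\epsilon$ for $B_\epsilon^*$ and $A_\epsilon$, and with Lemma~\ref{L:positive-maj} (which supplies a positive majorant for $C_\epsilon^*B$ because it maps $L^p$ into $C(\overline D)$), one obtains a positive majorant $\widehat B$ for $B$. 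One then upgrades $\widehat B$ to a majorant $B^\#$ valid for \emph{every} $z$ by composing with the averaging operator $\mathcal M$, and finally deduces the bound for $|B|$ by applying the majorant inequality to $h(w)=f(w)\,|B(w,z)|/B(w,z)$.

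Beyond the conceptual mismatch, there are concrete gaps in your argument. First, the assertion that $\int_D|g(w,\zeta)|^{-(n+1)}\,dV(\zeta)\leq C$ uniformly in $w$ ("the $\alpha=0$ case" of Lemma~\ref{L:comparison-op}) is false: Lemma~\ref{L:comparison-op} is proved only for $0<\alpha<1$, and running the same coordinate computation with $\alpha=0$ leaves the divergent integral $\int_{\mathbb{C}^{n-1}}(t+|w'|^2)^{-(n-1)}\,dV(w')$, which diverges logarithmically as $t=|\rho(z)|\to 0$. So neither $A_\epsilon^kC_\epsilon$ nor $RB^{(0)}$ can be shown to have a bounded kernel by the route you propose. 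Second, the Forelli--Rudin composition estimate you invoke is at the critical exponent $n+1$, where this type of estimate picks up a logarithmic factor rather than the clean bound $|g(w,z)|^{-(n+1)}$; hence the Neumann series for $B^{(0)}$ does not sum to a kernel bounded by $|g(w,z)|^{-(n+1)}$. Third, the invertibility of $I-R$ on $L^p$ for $p\neq 2$ would require a separate injectivity argument; it does not follow from the $L^p$ bound of Theorem~\ref{T:Bergman} alone. Each of these issues would need to be repaired, and the first two appear to be obstructions rather than technical oversights, which is why the paper proceeds via positive majorants instead of kernel estimates.
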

\indent To prove Theorem \ref{T:Bergman} we start with the following identities that hold on $L^2 (D)$. First $B B_\epsilon = B_\epsilon$, which follows from (a) of Proposition \ref{P:correction}, (together with Corollary \ref{C:B-eps-bdd}, for $p=2$); also $B_\epsilon B = B$, which is a consequence of (b) of Proposition \ref{P:correction}. Taking adjoints of the second of these identities and using that $B^\ast = B$ immediately yields
\begin{equation*}
   B \left( I - \left( B^\ast_\epsilon - B_\epsilon \right) \right) = B_\epsilon
\end{equation*} 
It then follows from Lemma \ref{L:B-B-star-dec} that
\begin{equation*}
   B \left( 1 + A_\epsilon \right) = B_\epsilon - B C_\epsilon .
\end{equation*} 
\indent Now for fixed $p, \ 1 < p < \infty$, choose $\epsilon = \epsilon (p)$ so small that according to Lemma \ref{L:B-B-star-dec} we have that $\left\| A_\epsilon \right\|_{L^p \to L^p}  < 1$. Then $I + A_\epsilon$ is invertible as an operator on $L^p$, using a Neumann series. Writing the inverse as 
$\left( I + A_\epsilon \right)^{-1}$ gives us
\begin{equation}\label{E:new-KS}
   B = \left( B_\epsilon - BC_\epsilon \right) \left( I + A_\epsilon \right)^{-1} .
\end{equation} 
\indent Since we know that $B_\epsilon$ is bounded on $L^p$ (see Corollary \ref{C:B-eps-bdd}), 
 it suffices to note that $BC_\epsilon$ is bounded on $L^p$. We observe this first for $p \leq 2$ as follows. The fact that the kernel of $C_\epsilon$ is continuous on 
$\overline D \times \overline D$ gives that $C_\epsilon$ maps $L^1$ to $L^\infty$, and hence $L^p$ to $L^2$. Thus $BC_\epsilon$ maps $L^p$ to $L^2$, and since $p \leq 2$, we have $BC_\epsilon$ maps $L^p$ to $L^p$. This proves that the boundedness of $B$ on $L^p$, when $p \leq 2$. \\

To obtain the result for $p\geq 2$ we may use the (self) duality of $B$ to reduce to the case $p \leq 2$. Alternatively we may retrace the steps leading to \eqref{E:new-KS} to obtain 
\begin{equation}\label{E:other-KS}
   \left( I - A_\epsilon \right) B = B^\ast_\epsilon + C^\ast_\epsilon B .
\end{equation} 
\noindent Then, after inverting $ \left( I - A_\epsilon \right)$, it suffices to see that $C_\epsilon^\ast B$ is bounded on $L^p$. For this one notes that $B$ maps $L^p$ to $L^2$, (since now $L^p \subset L^2$), and $C^\ast_\epsilon$ maps $L^2$ to $L^p$ (since it maps $L^1$ to $L^\infty$). The proof of Theorem \ref{T:Bergman} is now complete.\\

To prove Theorem \ref{T:abs-Bergman} we need to manipulate operators with positive kernels. To do so rigorously we formulate the following definition. Suppose $T$ is a bounded linear operator on $L^p$. We say that $T$ has a \textit{positive majorant} $\widehat{T}$, if  $\widehat{T}$ is bounded linear operator on $L^p$ that satisfies
\begin{equation}\label{E:def-pos-maj}
\begin{cases}
   \widehat{T} (f) \geq 0 \ \ \ \text{ if } \ \ f \geq 0, \ \ \text{ and}\\
| T(f)(z) | \leq \widehat{T}( | f | )(z), \ \ \text{ for } \ a.e. \ \ z.
\end{cases}
\end{equation} 
Observe that if $T_1$ and $T_2$ have positive majorants $\widehat{T}_1 \text{ and } \widehat{T}_2$ respectively, then 
\smallskip

   $\small{\bullet} \ \widehat{T}_1 + \widehat{T}_2$ and $\widehat{T}_1 \circ \widehat{T}_2$ are positive majorants for $T_1 + T_2$ and $T_1 \circ T_2$, 
   
   \quad respectively.
   \medskip
   
\noindent Also suppose $T_n$ have positive majorants $\widehat{T}_n$ and $T_n \to T$, while $\widehat{T}_n \to S$, as $n \to \infty$ strongly in $L^p$, then

   $\bullet\ S$ is a positive majorant of $T$.
\medskip

\noindent As a result, if $T$ has a positive majorant $\widehat{T}$ 
and $\displaystyle \| \widehat{T} \|_{L^p \to L^p} < 1$ \ (hence $\| T \|_{L^p \to L^p} < 1$), then 
\smallskip

 $\bullet \ \big( I - \widehat{T}\, \big)^{-1}$ is a positive majorant of 
 $\big(I - T\,\big)^{-1}$.

Indeed, by the above, $ \sum^N_{n=0}\limits \widehat{T}^n$ is a positive majorant of $\sum^N_{n=0}\limits T^n$, and the assertion follows by a passage to the limit as $N \to \infty$.

Let us now consider the case $p\geq 2 $. We will show first that the Bergman projection has a positive majorant, as an operator on $L^p$. The proof of Corollary \ref{C:B-eps-bdd} shows that $B_\epsilon$ and $B^\ast_\epsilon$ both have positive majorants; (these can be taken to be $c^\prime_\epsilon \varGamma_\epsilon$, for suitable $c^\prime_\epsilon$). Moreover this and the proof of (a) of Lemma \ref{L:B-B-star-dec} also shows that the operator $A_\epsilon$ also has a positive majorant of the form $c_\epsilon \varGamma_\epsilon$. If we set $T = A_\epsilon$, this means that there is a positive majorant $\widehat{T}$ for $T$, with $\| \widehat{T} \|_{L^p \to L^p} < 1$, \ if $\epsilon$ is sufficiently small. The same is true for
$T=(I-A_\epsilon)^{-1}$.

Thus by \eqref{E:other-KS}, $B$ will have a positive majorant as a result of the following simple lemma applied to $ T_0 = C^\ast_\epsilon B$, which 
(by Lemma
 \ref{L:B-B-star-dec}) is bounded from $L^p(D)$ to $C( \overline{D} )$ for all  $p\geq 2$.
\begin{lemma}\label{L:positive-maj}
   Suppose $T_0$ is a bounded linear mapping from $L^p (D)$ to $C(\overline{D} )$. Then as a linear mapping from $L^p (D)$ to $L^p (D)$,  $T_0$ has a positive majorant.
\end{lemma}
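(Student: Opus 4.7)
The plan is to realize $T_0$ as an integral operator with a kernel belonging uniformly (in $z$) to $L^q(D)$, and then take as positive majorant the integral operator whose kernel is the absolute value of this one. First, since by hypothesis $|T_0(f)(z)|\le M\|f\|_{L^p(D)}$ for every $z\in\overline D$, where $M:=\|T_0\|_{L^p\to C(\overline D)}$, the functional $f\mapsto T_0(f)(z)$ is a bounded linear functional on $L^p(D)$. The Riesz representation theorem (with conjugate exponent $q$, $1/p+1/q=1$) then produces, for each $z\in\overline D$, a unique $K(z,\cdot)\in L^q(D)$ with $\|K(z,\cdot)\|_{L^q}\le M$ and
\[
T_0(f)(z)\ =\ \int_D K(z,w)\,f(w)\,dV(w),\qquad f\in L^p(D).
\]

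The next step is to ensure that $K$ can be chosen jointly measurable on $\overline D\times D$. Since $z\mapsto T_0(f)(z)$ is continuous for every $f\in L^p(D)$, the map $z\mapsto K(z,\cdot)$ from $\overline D$ into $L^q(D)$ is weakly measurable; because $L^q(D)$ is separable, Pettis' theorem upgrades this to strong measurability, and a standard representative then yields joint measurability of $K(z,w)$. This is the one point in the argument that is not entirely automatic, but it is routine.

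With a jointly measurable $K$ in hand, define
\[
\widehat T_0(f)(z)\ :=\ \int_D |K(z,w)|\,f(w)\,dV(w).
\]
This operator is clearly linear and positivity-preserving, and the majorant inequality $|T_0(f)(z)|\le\widehat T_0(|f|)(z)$ is immediate from the triangle inequality inside the integral defining $T_0$. It remains to check that $\widehat T_0$ is bounded on $L^p(D)$. By H\"older's inequality and the uniform bound on $\|K(z,\cdot)\|_{L^q}$,
\[
|\widehat T_0(f)(z)|\ \le\ M\,\|f\|_{L^p(D)}\qquad\text{for every }z\in\overline D,
\]
so $\widehat T_0$ maps $L^p(D)$ continuously into $L^\infty(D)$. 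Since $D$ has finite Lebesgue measure, $L^\infty(D)\subset L^p(D)$, and therefore $\widehat T_0$ is a bounded operator on $L^p(D)$, which furnishes the required positive majorant.

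In summary, the substantive content is the dictionary between operators $L^p\to L^\infty$ (in particular, into $C(\overline D)$) and kernels in $L^\infty_z(L^q_w)$: once this representation is secured, passing to $|K(z,w)|$ is a free operation that converts $T_0$ into an honest positive majorant without destroying boundedness. I expect no other genuine obstacle.
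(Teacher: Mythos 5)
Your proof is correct and follows essentially the same route as the paper: represent the pointwise functional $f\mapsto T_0(f)(z)$ by a kernel $\psi_z\in L^q(D)$ with $\|\psi_z\|_{L^q}$ uniformly bounded, pass to $|\psi_z|$, and conclude via H\"older that the resulting operator maps $L^p$ into $L^\infty\subset L^p$. The only difference is that you explicitly justify joint measurability of the kernel via Pettis' theorem, a point the paper suppresses; this is a legitimate (and correct) extra detail rather than a different argument.
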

\begin{proof}
   For each $z \in D$, consider the linear functional $f \mapsto T_0 (f)(z)$. Then by assumption, $\displaystyle \left| T_0 (f)(z) \right| \leq c \| f \|_{L^p}$, and so there is a $\psi = \psi_z \in L^q (D)$ with $\displaystyle 1/p + 1/q = 1$, \ with $\left\| \psi_z   \right\|_{L^q} \leq c$, such that
\begin{equation*}
   T_0 f(z) = \int_D\limits \psi_z (w) f(w) dV(w).
   \end{equation*}
   Now define $\widehat{T}_0$ by
\begin{equation*}
    \widehat{T}_0 f(z) = \int_D\limits | \psi_z (w) | f(w) dV (w) , 
\end{equation*}
so
\begin{equation*}
   \| \widehat{T}_0 f \|_{L^p} \leq c^\prime \| \widehat{T}_0 f  \|_{L^\infty} \leq c^\prime c \| f \|_{L^p}, 
   \end{equation*}
   \smallskip
   
\noindent by H\"older inequality. 
The lemma is therefore proved.
\end{proof}
As a result $B$, the Bergman projection, has a positive majorant $\widehat{B}$. This means that
\begin{equation}\label{E:B-pos-maj}
    \left| \ \int\limits_D\limits B(w, z) f(w)\  dV(w)\  \right| \ \leq \ \widehat{B} ( | f |)(z)\quad \text{a.\, e.}\ \ z\in D.
\end{equation} 
\noindent To pass from \eqref{E:B-pos-maj} to the operator with kernel
 $| B(w, z) |$ requires a further step, because 
\eqref{E:def-pos-maj} is asserted only for a.e. $z \in D$, and the exceptional set may depend on $f$. To get around this difficulty we modify the
 majorant $\widehat{B}$, replacing it by another majorant $B^\#$ for which the analogue \eqref{E:def-pos-maj} holds  for \textit{all} $z \in D$.

In fact, define the {\it averaging operator} $\mathcal{M}$, by 
\begin{equation*}
   \mathcal{M} (F)(z) = \frac{1}{V(\mathcal{B}_z)} \int_{\mathcal{B}_z}\limits F(w) \,dV(w) \quad z\in D,
\end{equation*}
where $\mathcal{B}_z$ is the ball centered at $z$ of radius equal to 1/2 the distance of $z$ from $bD$. Note that $\mathcal{M}(F)(z)$ is defined for
all $z\in D$, and has the following basic properties:
\begin{itemize}
 \item[\emph{i.}\ ] $\mathcal{M}(B f)(z) = B(f)(z)$ for all $z \in D$ (by the mean value property applied to $B(f)$, since this is holomorphic).
 \smallskip
 
 \item[\emph{ii.}\ ] If $|f(w)|\leq|g(w)|$ a.e.\,$w$, then 
 $\mathcal{M} (|f|)(z)\leq \mathcal{M} (|g|)(z)$ for all $z$.
 
 \item[\emph{iii.}\ ] $\left|\mathcal{M} (f)(z)\right|\leq \mathcal{M} (|f|)(z)$.
 \end{itemize}
 \smallskip
 
 Now define $B^{\#}$ by
 \begin{equation*}
 B^{\#} (f)(z) = \mathcal{M} \left(\widehat{B}( f )\right)(z),\quad z\in D.
 \end{equation*}

Then \eqref{E:def-pos-maj} and the basic properties listed above imply that
\begin{equation}\label{E:B-B-always-maj}
   \left|\ \int_D\limits B (w, z) f(w) dV(w)\ \right| \leq B^\# (| f | )(z)
   \quad \text{for\ all}\quad z\in D.
\end{equation} 

\indent Moreover by the maximal theorem $f \to B^\# (f)$ is also bounded on $L^p$, by virtue of the $L^p$ boundedness of $\widehat{B}$. Finally for fixed $z$ apply \eqref{E:B-pos-maj} to $f(w)$ replaced by 
\begin{equation*}
h(w)=
\left\{
\begin{array}{rcl}
f(w)\displaystyle{\frac{|B(w, z)|}{B(w, z)}},
 &\text{if}&B(w, z)\neq 0 \\
& \\
 0, &\text{if}&  B(w, z) =0
\end{array}
\right.
\end{equation*}
Since the Bergman kernel, $B(w, z)$ is anti-holomorphic in $w\in D$,
then for any fixed $z\in D$ it will vanish only on a zero-measure subset
of $D$. This means that
\begin{equation*}
|h(w)| = |f(w)| \quad \text{a.\ e.}\quad w\in D,
\end{equation*}
and so
\begin{equation*}
B^\#(|h|)(z) \ =\  B^\#(|f|)(z)\quad \text{for\ all}\quad z\in D.
\end{equation*}
Also note that
\begin{equation*}
B(h)(z)\ =\ |B|(f)(z)\quad \text{for\ all}\quad z\in D,
\end{equation*}
and it follows from \eqref {E:B-B-always-maj}
 that
\begin{equation*}
|B(f) (z)|\ \leq\ B^{\#}(|f|)(z)
\quad \text{for\ all}\quad z\in D.
\end{equation*}
From these (and the fact that $B^{\#}$ is a positive majorant for $B$ everywhere in $D$) we obtain
\begin{equation*}
   \left| \  | B | (f)(z) \right| \leq B^{\#} ( | f | )(z), \ \ \text{ for\ all }z \in D .
\end{equation*}
This shows that   $f \to | B | (f)$ is bounded on $L^p(D)$ when $p \geq 2$. Since the operator $| B |$ with kernel $| B(w, z) |$ is obviously self-dual, the usual duality shows that it extends to a bounded operator on $L^p$, for $1 < p \leq 2$. Theorem \ref{T:abs-Bergman} is therefore proved.

\section{Concluding remarks}
\subsection{Density in $\vartheta L^p (D)$}
\begin{proposition}\label{P:density}
   The collection of functions that are each holomorphic in some neighborhood of $\overline{D}$ is dense in $\vartheta L^p (D)$, $1<p<\infty$.
\end{proposition}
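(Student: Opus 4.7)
My plan is to approximate $f\in\vartheta L^p(D)$ by explicit integrals involving the auxiliary operators $B_\epsilon$ of Section \ref{S:approx}. Fix $\epsilon>0$ small enough that Corollaries \ref{C:B-eps-bdd} and \ref{C:hol-ext-to-nbhd} apply, and for each $\lambda\in(0,\lambda_0/2)$ define
\begin{equation*}
f_\lambda(z)\ =\ \frac{1}{(2\pi i)^n}\int_{D_\lambda}f(w)\,B_\epsilon(w,z)\,dV(w).
\end{equation*}
The strategy is to show (i) that $f_\lambda$ extends to a function holomorphic on the open set $D_{-\lambda/2}$, which is an open neighborhood of $\overline{D}$, and (ii) that $f_\lambda\to f$ in $L^p(D)$ as $\lambda\to 0^+$; these two facts together deliver the claimed density.

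For (i), Corollary \ref{C:hol-ext-to-nbhd} gives that $z\mapsto B_\epsilon(w,z)$ is holomorphic on $D_{-\lambda}$ for each fixed $w\in D_\lambda$. A uniform positive lower bound $|g_\epsilon(w,z)|\gtrsim\lambda$ holds on $\overline{D_\lambda}\times\overline{D_{-\lambda/2}}$: for such $(w,z)$ one has $-\rho(w)-\rho(z)\ge\lambda-\lambda/2=\lambda/2$, and the Taylor-expansion argument behind \eqref{E:g-eps-bounds} yields $2\,\Re\,g_\epsilon(w,z)\ge\lambda/2+c|w-z|^2$ on a small complex neighborhood of $\overline{D}$ (which contains $\overline{D_{-\lambda/2}}$ when $\lambda$ is small). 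Consequently $B_\epsilon(w,z)$ is jointly bounded on any compact subset of $\overline{D_\lambda}\times\overline{D_{-\lambda/2}}$, and a routine Morera--Fubini argument confirms that $f_\lambda$ is holomorphic on $D_{-\lambda/2}$.

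For (ii), I invoke the reproducing identity of Proposition \ref{P:correction}(b): since $D$ is bounded, $\vartheta L^p(D)\subset\vartheta L^1(D)$, so $B_\epsilon(f)(z)=f(z)$ pointwise for every $z\in D$. Subtracting the integral defining $f_\lambda$ from the integral representation of $B_\epsilon(f)$ gives
\begin{equation*}
f(z)-f_\lambda(z)\ =\ B_\epsilon\bigl(f\cdot\chi_{D\setminus D_\lambda}\bigr)(z),\qquad z\in D,
\end{equation*}
so the $L^p$-boundedness of $B_\epsilon$ provided by Corollary \ref{C:B-eps-bdd} (with the now-fixed $\epsilon$) yields
\begin{equation*}
\|f-f_\lambda\|_{L^p(D)}\ \le\ c_{p,\epsilon}\,\|f\cdot\chi_{D\setminus D_\lambda}\|_{L^p(D)}\ \longrightarrow 0
\end{equation*}
as $\lambda\to 0^+$, by dominated convergence together with the vanishing of $|D\setminus D_\lambda|$.

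The only technical point requiring any care is the lower bound on $|g_\epsilon(w,z)|$ when $z$ lies slightly outside $\overline{D}$, since the bound \eqref{E:g-eps-bounds} was stated only for $w,z\in\overline{D}$; but the underlying Taylor-expansion argument is local and extends without change to a small complex neighborhood of $\overline{D}$, so this is no obstacle. Otherwise, no step should present difficulty: all the heavy machinery (the construction of $\{B_\epsilon\}$, its reproducing property, its $L^p$-boundedness, and the holomorphic extension of its kernel) has already been put in place earlier in the paper.
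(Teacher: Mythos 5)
Your proposal is correct and follows essentially the same route as the paper: truncate $f$ to an interior domain $D_\lambda$, apply $B_\epsilon$ to the truncation, invoke Corollary \ref{C:hol-ext-to-nbhd} for holomorphic extension past $\overline{D}$, and use Corollary \ref{C:B-eps-bdd} together with the reproducing identity $B_\epsilon f = f$ to get $L^p$ convergence. The only differences are cosmetic (a continuous parameter $\lambda$ in place of $1/n$, targeting $D_{-\lambda/2}$ rather than $D_{-\lambda}$, and spelling out the Morera--Fubini step that the paper leaves implicit).
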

\indent In fact suppose that $f \in \vartheta L^p (D)$. Then we know that $f = B_\epsilon (f)$ (see Proposition \ref{P:correction}). Now let 
\begin{equation*}
f_n=
\left\{
\begin{array}{rcl}
f(w),& \text{for}& w \in D_{1/n} =\{\rho<-1/n\},\\
0,&\text{for}&w \in D \setminus D_{1/n}
\end{array}
\right.
\end{equation*}
and set
$F_n = B_\epsilon (f_n )$. Then clearly $\| f_n - f \|_{L^p (D)} \to 0$ as $n \to \infty$, which implies that 
\begin{equation*}
   \left\| F_n - f \right\|_{L^p (D)} = \left\| B_\epsilon (f_n - f) \right\|_{L^p (D)} \leq c \| f_n - f \|_{L^p (D)} \to 0
\end{equation*}
by Corollary \ref{C:B-eps-bdd}.
 However if $1/n < \lambda_{0}/2$, where $\lambda_0$ is as in
\eqref{E:union},
 then
 Corollary \ref{C:hol-ext-to-nbhd} shows that $F_n$
  is holomorphic in $D_{-1/n} \supset \overline{D}$, which proves 
 Proposition \ref{P:density}.
\subsection{The Bergman projection $B^\sigma$}

Suppose $\sigma$ is a positive function on $D$. Define the inner product $( \cdotp , \cdotp )_\sigma$ by 
\begin{equation*}
   (F, G)_\sigma = \int_D\limits F(w) \overline{G} (w) \,\sigma (w) dV(w)
   \end{equation*}
   and write
    $L^2_\sigma(D)$
      for the corresponding $L^2$ space with norm
$$\| F \|_{L^2_\sigma} = ( F, F)^{1/2}_\sigma$$ 
Let $B^\sigma$ be the Bergman projection corresponding to $L^2_\sigma$, that is, the orthogonal projection via $( \cdotp
 , \cdotp )_\sigma$ of $L^2_\sigma$ to the weighted Bergman space $\vartheta L^2_\sigma$. Note that $B = B^\sigma$ if $\sigma \equiv 1$.

We may ask whether $B^\sigma$ (like $B$) is bounded on $L^p$ (or $L^p_\sigma$), for a given $\sigma$. (This question was raised by Polam Yung.) We consider here only the first case of interest, when $\sigma$ is continuous in $\overline{D}$ and nowhere zero. Then we can assert
\begin{itemize}
   \item $B^\sigma$ is bounded on $L^p_\sigma$, \ \ for \ $1 < p < \infty$.
\end{itemize}

Under our assumptions on $\sigma$ the norm on $L^p_\sigma$ is clearly equivalent with the norm on $L^p$. Thus we can also assert 
\begin{itemize}
   \item $B^\sigma$ is bounded on $L^p$, \ \ for \   $1 < p < \infty$.
   \item[]
   \item Similar results hold for the operator $|B^\sigma|$.
\end{itemize}

\indent Note however that these results are not direct consequences of Theorem \ref{T:Bergman}
 since there is no simple relation expressing $B^\sigma$ in terms of $B$.
These assertions can, however, be established by reprising the proofs of Theorems \ref{T:Bergman} and \ref{T:abs-Bergman}. One starts by proving an analogue of 
Lemma \ref{L:B-B-star-dec}, which is based on the following observation. Suppose that $\widetilde{B}^\sigma_\epsilon$ is defined to be the adjoint of $B_\epsilon$ with respect to $( \cdotp , \cdotp )_\sigma$. Then  $\widetilde{B}^\sigma_\epsilon = \sigma^{-1} \cdot B^\ast_\epsilon \cdot \sigma$. Further details are left to the interested reader.

\end{document}